\numberwithin{equation}{section}
\newtheorem{theorem}{Theorem}
\newtheorem{proposition}[theorem]{Proposition}
\newtheorem{question}[theorem]{Question}
\newtheorem{corollary}[theorem]{Corollary}
\newtheorem{lemma}[theorem]{Lemma}
\theoremstyle{definition}
\newtheorem{remark}[theorem]{Remark}
\newtheorem{example}[theorem]{Example}
\newtheorem{definition}[theorem]{Definition}
\begin{document}

\title{Shellability of the quotient order \\ on lattice path matroids}
\date{\today}

\author[C. Benedetti]{Carolina Benedetti}
\address[C. Benedetti]{Department
of Mathematics, University of the Andes, Bogot\'a, Colombia}
\email{c.benedetti@uniandes.edu.co}

\author[A. Dochtermann]{Anton Dochtermann}
\address[A. Dochtermann]{Department of Mathematics, 
Texas State University, 
San Marcos, TX, USA}
\email{dochtermann@txstate.edu}

\author[K. Knauer]{Kolja Knauer}
\address[K. Knauer]{
Department of Mathematics, University of Barcelona, Spain}
\email{kolja.knauer@ub.edu}

\author[Y. Li]{Yupeng Li}
\address[Y. Li]{Department
of Mathematics, Duke University, Durham, NC, USA}
\email{ypli@math.duke.edu}

\begin{abstract}
The concept of a matroid quotient has connections to fundamental questions in the geometry of flag varieties. In previous work, Benedetti and Knauer characterized quotients in the class of lattice path matroids (LPMs) in terms of a simple combinatorial condition. As a consequence they showed that the quotient order on LPMs yields a graded poset whose rank polynomial relates to a refinement of the Catalan numbers. 
In this work we show that this poset admits an EL-labeling, implying that the order complex is shellable and hence enjoys several combinatorial and topological properties. We use this to establish bounds on the M\"obius function of the poset, interpreting falling chains in the EL-labeling in terms of properties of underlying permutations.  Furthermore, we show that this EL-labeling is in fact a Whitney labeling, in the sense of the recent notion introduced by Gonz\'alez D'Le\'on and Hallam.
\end{abstract}
\maketitle

\section{Introduction}
Matroids are combinatorial objects that generalize the notion of independence, and can be studied from a variety of perspectives. In particular, any rank $k$ representable matroid $M$ (over a field ${\mathbb K}$) on ground set $E = [n]$ can be described as a $k$-dimensional linear subspace $L$ in ${\mathbb K}^n$.  If one thinks of $L$ as a point in the Grassmannian $\text{Gr}_{k,n}$, the torus closure of $L$ in this space yields a projective toric variety that is isomorphic to the toric variety obtained from the matroid basis polytope of $M$.  
In recent years, there has been increased interest in studying various notions of \emph{positivity} in geometry and combinatorics (where one restricts to the case ${\mathbb K} = {\mathbb R}$). In particular, the notion of a positroid has been introduced by Postnikov in \cite{positivity} as a way to stratify the totally nonnegative Grassmannian $Gr^{\geq 0}_{k,n}$.  Positroids are examples of representable matroids, and subsequent work has lead to a number of combinatorial characterizations.

The \emph{(real) full flag variety $\mathcal F\ell_n$} consists of all sequences $F:V_{0}\subset V_1\subset\cdots\subset V_{n}$ of $\mathbb R$-vector spaces where $\dim V_i=i$ for all $i$. Note that such a flag $F$ has that property that each matroid $M_i$ arising from $V_i$ is (by definition) representable. Such a collection of matroids $(M_0,M_1,\cdots,M_n)$ give rise to notion of a matroid quotient,  where each $M_i$ is a \emph{quotient} of $M_{j}$ for $1\leq i\leq j\leq n$. One can check that every circuit (i.e. minimally dependent set) in $M_j$ is a union of circuits in $M_i$. 
This leads to one of several equivalent ways to define a matroid quotient, and in Definition \ref{def:matroid_quotient} we provide another characterization.
A \emph{full flag matroid} is a sequence of matroids $(M_0,M_1,\cdots,M_n)$ such that $M_i$ is a quotient of $M_{j}$ for $1\leq i\leq j\leq n$. Hence, points in $\mathcal F\ell_n$ give rise to full flag matroids, although it is not the case that every full flag matroid comes from a point in $\mathcal F\ell_n$.

Lattice path matroids (LPMs), introduced by Bonin, de Mier, and Noy in \cite{BdMN2003}, are a particular class of positroids. For a fixed $n$, the data of a lattice path matroid $M = M[U,L]$ is given by an `upper' path $U$ and a `lower' path $L$, both lattice paths from $(0,0)$ to $(n-k,k)$ for some $k\leq n$. The bases of $M$ are given by all the lattice paths from $(0,0)$ to $(n-k,k)$ that lie in between $U$ and $L$. 
From a geometric perspective, an LPM corresponds to a generic point in a cell arising from the Richardson cell decomposition of the Grassmannian $Gr_{k,n}$.

In \cite{BenKna} Benedetti and Knauer considered quotients of LPMs that are themselves LPMs. They provided a combinatorial condition to identify when an LPM is a quotient of another LPM, see Theorem \ref{thm:lpm_quotients} below.
One can then define $\mathcal{P}_n = (\mathcal{P}_n, \leq_q)$ to be the poset whose elements are all LPMs on ground set $[n]$, where $M'\leq _q M$ if and only if $M'$ is a \emph{quotient} of $M$. 
From a matroidal point of view, a maximal chain $L_0\leq_q L_1\leq_q\cdots\leq_q L_n$ in $\mathcal{P}_n$ is a a full flag matroid. In this case we refer to maximal chains of $\mathcal{P}_n$ as (full) \emph{lattice path flag matroids}, or LPFMs for short. 

Now, the \emph{nonnegative (full) flag variety} $\mathcal F\ell_n^{\geq 0}$ consists of flags $F:V_{0}\subset V_1\subset\cdots\subset V_{n}\in\mathcal F\ell_n$ for which there exists a matrix $A_F$ that realizes $F$ with the additional property that for each $i$, the submatrix given by the top $i$ rows of $A_F$ has nonnegative maximal minors. From this it follows that the flag matroid $(M_0,M_1,\dots,M_n)$ arising from $F$ has the property that each $M_i$ is a positroid. If a flag of matroids $(M_0,M_1,\dots,M_n)$ has the property that each $M_i$ is a positroid, we refer to it as a \emph{positroid (full) flag}. 
We emphasize that it is \emph{not} the case that every positroid flag $M_0\leq_qM_1\leq_q\cdots\leq_q M_n$ corresponds to a point in $\mathcal F\ell_n^{\geq 0}$, see \cite[Example 7]{BenKna}. 

Given a flag of matroids $\mathcal M:M_0\leq_qM_1\leq_q\cdots\leq_q M_n$, the Minkowski sum of the matroid polytopes $P_{M_i}$ is the \emph{flag matroid polytope $P_{\mathcal M}$}. Such a polytope  $P_{\mathcal M}$ has vertices given by  certain permutations of the point $(1,2,\dots,n)$. A flag of positroids 
$\mathcal M:M_0\leq_qM_1\leq_q\cdots\leq_q M_n$ comes from a point in $\mathcal F\ell_n^{\geq 0}$ if and only if (the 1-skeleton of) $P_{\mathcal M}$ is an interval in the Bruhat order of the symmetric group $S_n$. Flag positroid polytopes arising points in $\mathcal F\ell_n^{\geq 0}$ are known as \emph{Bruhat interval polytopes}  \cite{KW13}. As proved in \cite[Corollary 33]{BenKna}, each LPFM has the property that its polytope is an interval in the Bruhat order, hence it is a Bruhat interval polytope. In particular, every LPFM is realized by a point in $\mathcal F\ell_n^{\geq 0}$.

\subsection{Our contributions}
In our manuscript we are interested in the shellability of the order complex $\Delta(\mathcal{P}_n)$ of $\mathcal{P}_n$.
In general, determining if a given poset $P$ is shellable is difficult (NP-complete in the appropriate sense, see \cite{NPcomplete}), and several methods for constructing shellings have been developed in recent years. Of particular relevance for us is a technique that involves labeling the edges of the Hasse diagram of $P$ with certain properties. The resulting \emph{EL-labelings} leads to a notion of a \emph{lexicographic shelling} of the poset, see below for details.

Shellings also have close connections to the \emph{M\"obius function} $\mu$ of a poset $P$.
In particular if $P$ admits an EL-labeling, then $\mu(x,y)$ is given by the number of \emph{falling chains} in the interval $[x,y]$ (see Section \ref{sec:shelling}). This leads to a way to compute the M\"obius function, and also provides a topological interpretation. From the definition of an EL-labeling it follows that every interval $[x,y]$ is shellable (and hence homotopy equivalent to a wedge of spheres), and $\mu(x,y)$ calculates the number of spheres in this wedge product.

Now we return to $\mathcal{P}_n = (\mathcal{P}_n, \leq_q)$, the quotient poset of LPMs on ground set $[n]$. The \emph{good pairing} from \cite{BenKna} gives rise to an edge labeling of the Hasse diagram on the poset defined by $\leq_q$. Our first result is the following.

\newtheorem*{thm:EL}{Theorem \ref{thm:EL}}
\begin{thm:EL}
The good pair labeling $\lambda$ provides an EL-labeling of ${\mathcal P}_n$.  
\end{thm:EL}

This answers a question posed by the first and third author in \cite{BenKna}, originally suggested by Bruce Sagan \cite{Sagan}. As discussed above, Theorem \ref{thm:EL} implies that the simplicial complex $\Delta(\mathcal{P}_n)$ is shellable, and hence its geometric realization is homotopy equivalent to a wedge of spheres. This also provides us with a tool to compute (or at least estimate) $\mu(\mathcal{P}_n)$, and our next results provide bounds on these values.

Recall that $\mu(\mathcal{P}_n)$ is given by counting falling chains in the relevant interval, determined by the EL-labeling. We use the fact that these falling chains can be encoded by an ordered pair $(\sigma, \tau)$ of permutations of $n$, which allows us to use language from symmetric groups. For example, the condition of being a \emph{good pair} can be phrased in terms of the \emph{Lehmer code} of the underlying permutations, see Theorem \ref{thm:goodpairs}. In addition, if we let $\sigma = w_0$ denote the longest permutation, one can check that any choice of $\tau$ yields a falling chain. This implies that $\mu(\mathcal{P}_n) \geq n!$ for any $n$. In fact $\mu(\mathcal{P}_3) = 6$ and $\mu(\mathcal{P}_4) = 25$.

We next investigate the structure of falling chains in ${\mathcal P}_n$ under our EL-labeling, with the goal of further understanding the growth rate as $n$ increases. We let ${\mathcal F}_{n}$ denote the set of falling chains in ${\mathcal P}_{n}$. We first show that ${\mathcal P}_{n-1}$ naturally sits inside ${\mathcal P}_{n}$ as a subposet, and analyze how falling chains can be extended. We prove in Corollary \ref{cor:lowerbound} that 
\[|{\mathcal F}_{n+1}| \geq (n+1) |{\mathcal F}_n|.\]

Next we study how elements of ${\mathcal F}_{n}$, described by a pair of permutations $(\sigma, \tau)$, can be understood in terms of the left Bruhat order as we change $\sigma$. Recall that if $\sigma = w_0$ we obtain $n!$ falling chains (the most we could hope for), whereas if $\sigma = \text{id}$, no falling chains are possible. We prove that this extends to a more general monotonicity property for falling chains. 
In what follows, for a permutation $\sigma \in S_n$, we let $C_\sigma$ denote the set of falling chains in ${\mathcal F}_{n}$ with first permutation $\sigma$. We use $\preceq_L$ to denote left Bruhat order.

\newtheorem*{thm:Bruhat}{Theorem \ref{thm:Bruhat}}
\begin{thm:Bruhat}
 Given two permutations $\sigma,\sigma'\in S_n$, we have $|C_\sigma| \leq |C_{\sigma'}|$ if $\sigma \preceq_L \sigma'$.
\end{thm:Bruhat}

It's not hard to show that $|C_{w_0}| = n!$, but computing the value of $|C_\sigma|$ for general $\sigma$ seems difficult. However, we are able to provide an explicit formula for the case $\sigma = s_iw_0$ (see Theorem \ref{thm:siw0}).  

In the last part of the paper, we consider further properties of our good pair labeling. Motivated by notions of \emph{Whitney duality}, Gonz\'alez D'Le\'{o}n and Hallam \cite{GH} introduced the notion of an \emph{EW-labeling} of a poset. The conditions are similar to the EL definition, with an extra rank two switching property, see Definition \ref{defn:EW}. As shown in \cite{GH}, if a poset $P$ admits an EW-labeling $\lambda$, one can construct a \emph{Whitney dual} poset $Q_{\lambda}(P)$ with the property that the first and second Whitney numbers of $P$ are exchanged.
In Proposition \ref{prop:EW}, we prove that our good pair labeling indeed satisfies these conditions.

The paper is organized as follows. In Section \ref{sec:prelim} we discuss basic notions of matroid quotients, lattice path matroids, as well as tools from EL-shellings of posets. In Section \ref{sec:proofs} we prove our main results, namely that our good pair labeling is an EL-shelling of ${\mathcal P}_n$. Here we also study the structure of falling chains under this labeling in terms of properties of the underlying permutations. In Section \ref{sec:whitney_dual} we prove that our labeling is Whitney, and provide an example of the Whitney dual arising from ${\mathcal P}_3$. We end in Section \ref{sec:further} with some open questions.

\section{Preliminaries}\label{sec:prelim}

In this section, we provide definitions and set up notation used throughout the paper. We also present some preliminary results necessary to prove our main theorems. This includes basics of lattice path matroids and their quotients, as well as important facts from the theory of shellings and EL-labeling on posets.

\subsection{Matroids and quotients}

We first recall some relevant notions from matroid theory. We assume the reader is familiar with basic properties properties of matroids, we refer to \cite{Oxley} for any undefined terms. 

\begin{definition}\label{def:matroid_quotient}\hfill
\begin{enumerate}
    \item A \textit{matroid} $M$ on $E$ is a nonempty collection $\mathcal{B}(M)$ of subsets of $E$ satisfying the exchange axiom: for any $B_1,B_2\in\mathcal{B}(M)$ and $x\in B_1\backslash B_2$, there exists $y\in B_2\backslash B_1$ such that $(B_1\backslash \{x\})\cup \{y\}\in \mathcal{B}(M)$. Elements of $\mathcal{B}$ are called \textit{bases}.
    \item Three operations on matroids below are necessary to define matroid quotients:
    \begin{enumerate}
        \item Deletion: $M\backslash T$ is a matroid on $E\backslash T$ with bases
        \[\mathcal{B}(M\backslash T) = \text{ maximal members of }\{B-T:B\in \mathcal{B}(M)\}
        \]
        \item Restriction: $M|_{T}$ is a matroid on $T$ and defined by $M|_{T} = M\backslash(E\backslash T)$. We denote basis elements of $M|_{T}$ as $B_T$ 
        \item Contraction: $M/T$ is a matroid on $E\backslash T$ with bases 
        \[\mathcal{B}(M/T) = \{S\subseteq E\backslash T: S\cup B_T \in \mathcal{B}(M)\}\]
    \end{enumerate}
    \item  A matroid $M^\prime$ is a \textit{quotient} of $M$ if there exists a matroid $N$ on the ground set $E$ and some $T\subseteq E$ such that $M = N\backslash T$ and $M^\prime = N/T$.
\end{enumerate}
\end{definition}

The main objects that we study is lattice path matroids (LPMs). The data of an LPM is given by a pair of noncrossing monotone lattice paths $U$ and $L$, which start and end at the same vertex. This is typically depicted as a diagram in the plane grid as in Figure~\ref{fig:xmpl}, which is bounded above by $U$ and below by $L$. Any monotone lattice path from the bottom left to the upper right corner inside this diagram is identified with a set $B$, where $i\in B$ if and only if the $i$th step of the path is north. Now, the collection $\mathcal{B}$ of these sets forms the set of bases of a matroid called \textit{lattice path matroid}, which we denote as $M[U,L]$. As a special case, a matroid is \emph{Schubert} if it is an LPM of the form $M[U,L]$, where $U$ does all its North steps first. In particular, \emph{uniform} matroids are LPMs where furthermore $L$ does all its North steps last.  

One can see that LPMs are special cases of \emph{transversal} matroids as well as \emph{positroids}. We refer the reader to ~\cite{BdMN2003} for fundamental properties of LPMs.

\begin{figure}[htp]
    \centering
    \includegraphics[width=.3\textwidth]{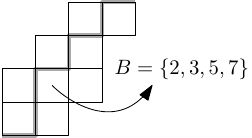}
    \caption{A basis in the diagram representing the LPM $M[1246,3568]$.}\label{fig:xmpl}
\end{figure}

Let $\mathcal{P}_n = (\mathcal{P}_n, \leq_q)$ be the partial order on the set of LPMs on ground set $[n]$, where $M' \leq _q M$ if $M'=M[U',L']$ is a quotient of $M=M[U,L]$. As observed in \cite{BenKna}, quotients of LPMs that are themselves LPMs have a more straightforward combinatorical description. For this, we need the following notion.

\begin{definition}\label{def:good_pair}
Let $M=M[U,L]$ be an LPM where $U=\{u_1,\cdots, u_k \}$, $L=\{\ell_1,\cdots ,\ell_k \}$. Let $1\leq i,j\leq k$. We say that $(\ell_i,u_j)$ is a \emph{good pair of $M$} if \begin{enumerate}
\item $i\leq j$,
\item $u_j-\ell_i\leq j-i$.
\end{enumerate}
     
  \end{definition}
               
Graphically, being a good pair can be visualized as follows. The step $u_j$ is such that its northern vertex $(a,b)$ determines the closed region $R_{u_j}$ bounded below by $L$, and lies in the halfspaces $x\geq a$ and $y\leq b$. Then the pair  $(\ell_i,u_j)$ is a good pair if  $\ell_i$ lies in $R_{u_j}$.  A pair $(\ell_i,u_j)$ that is not good is said to be \emph{bad}. Figure~\ref{fig:nonquotient} depicts a bad pair $(\ell_i,u_j)$. 

\begin{figure}[htp]
    \centering
    \includegraphics[width=.35\textwidth]{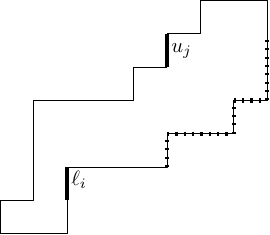}
    \caption{An LPM with a bad pair $(\ell_i, u_j)$. Exactly those upward segments $\ell\in L$ on the dotted path yield good pairs with $u_j$.}\label{fig:nonquotient}
\end{figure}

For $\{u_{1},\dots,u_{z} \}\subseteq U$ and $\{\ell_{1},\dots,\ell_{z} \}\subseteq L$, we call a sequence $((\ell_{i_1},u_{j_1}),\dots, (\ell_{i_z},u_{j_z}))$ a \emph{pairing of $M = M[U,L]$}. We say that a pairing of $M$ is \emph{good} if  $(\ell_{i_r},u_{j_r})$ is a good pair of $M[U',L']$ where $U'=U\setminus\{u_{j_1},\dots,u_{j_{r-1}} \}$ and $L'=L\setminus\{\ell_{j_1},\dots,\ell_{j_{r-1}} \}$, for $1\leq r\leq z-1$. Now let $M=M[U,L]$ and $M'=M[U',L']$ be LPMs over $[n]$ such that $L'\subseteq L,U'\subseteq U$ and set
$U\setminus U'=\{u_{i_1}<\ldots <u_{i_z}\}$, $L\setminus L'=\{\ell_{j_1}<\ldots < \ell_{j_z}\}$.
We refer to the sequence  $((\ell_{j_1},u_{i_1}),\ldots, (\ell_{j_z},u_{i_z}))$ as \emph{the greedy pairing of $(L\setminus L',U\setminus U')$}. From \cite{BenKna}, we have following characterization of quotients of LPMs.

\begin{theorem}\cite[Theorem 19]{BenKna}\label{thm:lpm_quotients}
Let $M=M[U,L]$  and $M'=M[U',L']$ be LPMs on the ground set $[n]$. We have that $M' \leq_q M$ if and only if $U'\subseteq U$, $L'\subseteq L$ and the greedy pairing of $(L\setminus L',U\setminus U')$ is good.
\end{theorem}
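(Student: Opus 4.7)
The plan is to prove both directions of the equivalence by induction on the size $z = |U \setminus U'| = |L \setminus L'|$, reducing the problem to the single-step case in which exactly one good pair is removed. The backbone of the argument is the transitivity of matroid quotients: if $M_2$ is a quotient of $M_1$ and $M_1$ is a quotient of $M_0$, then $M_2$ is a quotient of $M_0$. The sequential nature of the definition of a good pairing dovetails perfectly with this: each pair $(\ell_{j_r},u_{i_r})$ is required to be good for an intermediate LPM, so stacking $z$ single-step quotients produces the full quotient relation.

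For the sufficiency direction, given a good pair $(\ell, u)$ of $M[U,L]$, I would exhibit a matroid $N$ on the extended ground set $E = [n] \cup \{x\}$ together with $T = \{x\}$ such that $M[U, L] = N \setminus T$ and $M[U \setminus \{u\}, L \setminus \{\ell\}] = N / T$. A natural candidate is to let $N$ itself be an LPM on $[n+1]$ obtained by splicing an extra slot into the diagram at a location dictated by the pair $(\ell,u)$, with upper and lower paths chosen so that north steps match $U \cup \{x\}$ and $L \cup \{x\}$ in the appropriate order. The good-pair conditions $i \leq j$ and $u_j - \ell_i \leq j - i$ are precisely what is needed to guarantee that the enlarged paths remain non-crossing. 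Then deletion of $x$ gives $M[U,L]$, while contraction of $x$ forces the bases of the resulting LPM to be lattice paths between $U\setminus\{u\}$ and $L\setminus\{\ell\}$.

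For the necessity direction, I would use the rank-function criterion: $M' \leq_q M$ if and only if $r_M(B) - r_M(A) \geq r_{M'}(B) - r_{M'}(A)$ for all $A \subseteq B \subseteq [n]$. The containments $U' \subseteq U$ and $L' \subseteq L$ can be extracted by taking $A$ and $B$ to be initial or final segments of $[n]$, where the rank of an LPM on such a segment is determined by the height of the paths $U$ and $L$ at the corresponding $x$-coordinate. Once these containments are in hand, suppose the greedy pairing fails to be good, and let $(\ell_{j_r}, u_{i_r})$ be the first violating pair in the greedy order, with either $j_r > i_r$ or $u_{i_r} - \ell_{j_r} > i_r - j_r$. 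The geometric violation localizes to a region of the diagram (namely $R_{u_{i_r}}$ and the corresponding segment of $L$), and I would translate it into a specific pair $A \subseteq B$ on which the rank inequality fails, giving the required obstruction.

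The main obstacle will be in the necessity direction, specifically in pinning down why the \emph{greedy} pairing is the correct witness rather than some other pairing. This amounts to an exchange-style argument: any good pairing can be transformed into the greedy one without losing goodness, so the existence of a good pairing is equivalent to the greedy pairing being good. Carrying this out cleanly, while simultaneously keeping track of the iteratively shrinking LPMs that govern the definition of a good pairing, is the technical heart of the proof. A secondary challenge is a careful case analysis for the rank-obstruction argument: the two ways a pair can fail to be good ($i > j$ versus the slope condition $u_j - \ell_i > j - i$) lead to qualitatively different rank-function violations in the underlying matroid.
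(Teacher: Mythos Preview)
This theorem is not proved in the present paper; it is quoted from \cite[Theorem~19]{BenKna} and used as input. Consequently there is no proof here against which to compare your proposal.

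That said, your outline is plausible and is consistent with the fragments of the original argument that the present paper does record. The exchange-style reduction you flag as the main obstacle---showing that the existence of \emph{some} good pairing forces the greedy pairing to be good---is precisely the content of Lemma~\ref{lem:removing} (quoted here as \cite[Lemma~18]{BenKna}), and the existence of the greedy chain itself is attributed to \cite[Lemma~16]{BenKna} inside the proof of Theorem~\ref{thm:EL}. So the skeleton you propose matches the machinery the source paper appears to build. Your plan for sufficiency (splicing an extra element to produce an LPM $N$ on $[n+1]$ with $N\setminus x = M[U,L]$ and $N/x = M[U\setminus\{u\},L\setminus\{\ell\}]$) and for necessity (the rank-difference criterion $r_M(B)-r_M(A)\geq r_{M'}(B)-r_{M'}(A)$, specialized to initial and final segments to extract $U'\subseteq U$ and $L'\subseteq L$) are both natural and should go through. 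The two places where your sketch is genuinely thin are (i) verifying that contraction of the spliced element really yields $M[U\setminus\{u\},L\setminus\{\ell\}]$ on the nose, which depends on where exactly the new step is inserted relative to $u$ and $\ell$, and (ii) converting a first failure of the greedy pairing into an explicit pair $A\subseteq B$ violating the rank inequality; both are doable but require the case analysis you acknowledge.
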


One part of the proof of Theorem \ref{thm:lpm_quotients} is the following result, which will also be useful for us.

\begin{lemma}\cite[Lemma 18]{BenKna}\label{lem:removing}
 Let $M=M[U,L]$, $\ell_i<\ell_{i'}\in L$ and $u_j<u_{j'}\in U$. If $(\ell_i,u_j)$ and $ (\ell_{i'},u_{j'})$ are good then $(\ell_{i},u_{j})$ is good in $M[U\setminus\{u_{j'}\},L\setminus\{\ell_{i'}\}]$ and $(\ell_{i'},u_{j'})$ is good in $M[U\setminus\{u_{j}\},L\setminus\{\ell_{i}\}]$.
\end{lemma}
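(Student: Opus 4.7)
The plan is to verify the good pair inequalities directly by carefully tracking how the indices of $u_j,\ell_i$ (resp.\ $u_{j'},\ell_{i'}$) shift when we delete $u_{j'},\ell_{i'}$ (resp.\ $u_j,\ell_i$) from $U$ and $L$. The whole proof reduces to an observation about how the indices relabel, together with a short check that the resulting pairs of paths still form a valid LPM.

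First I would verify that $M' := M[U\setminus\{u_{j'}\},L\setminus\{\ell_{i'}\}]$ and $M'' := M[U\setminus\{u_{j}\},L\setminus\{\ell_{i}\}]$ are genuine LPMs, i.e.\ that the resulting upper path stays weakly above the lower path. Using the prefix-count characterization, one checks that the inequality $|\{r:u_r\leq p\}|\geq |\{r:\ell_r\leq p\}|$ is preserved outside the possibly problematic range $u_{j'}\leq p<\ell_{i'}$ (for $M'$) or $u_{j}\leq p<\ell_{i}$ (for $M''$). In these ranges the upper count drops by one while the lower is unchanged, but the original gap is bounded below by $j'-(i'-1)\geq 1$, respectively $j-(i-1)\geq 1$, using the good pair conditions $i\leq j$ and $i'\leq j'$. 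So $M'$ and $M''$ are valid LPMs.

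For the first claim, since $i<i'$ and $j<j'$, the indices of $\ell_i$ in $L\setminus\{\ell_{i'}\}$ and of $u_j$ in $U\setminus\{u_{j'}\}$ are unchanged, still equal to $i$ and $j$. Therefore the conditions $i\leq j$ and $u_j-\ell_i\leq j-i$ characterizing the good pair $(\ell_i,u_j)$ in $M'$ are literally the same as in $M$, hence hold by hypothesis.

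For the second claim, the situation is symmetric but with a one-unit index shift. In $M''$ the element $\ell_{i'}$ now occupies position $i'-1$ of $L\setminus\{\ell_i\}$, and $u_{j'}$ occupies position $j'-1$ of $U\setminus\{u_j\}$. The good pair conditions thus become $(i'-1)\leq(j'-1)$ and $u_{j'}-\ell_{i'}\leq (j'-1)-(i'-1)$, which simplify back to $i'\leq j'$ and $u_{j'}-\ell_{i'}\leq j'-i'$, the original good pair conditions for $(\ell_{i'},u_{j'})$ in $M$.

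The only real obstacle is the preliminary LPM check in the first step, since both subsequent verifications are purely a matter of bookkeeping about where the numerical constants $i,j,i',j'$ migrate. I expect the whole argument to be short once the index relabeling is set up cleanly.
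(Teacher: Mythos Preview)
The paper does not actually prove this lemma; it is quoted from \cite{BenKna} and used as a black box. So there is no in-paper argument to compare against.

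That said, your proposal is correct and is essentially the natural direct verification. The index-tracking parts are straightforward, as you note: removing an element with a larger index leaves the position of $\ell_i$ (resp.\ $u_j$) unchanged, while removing one with a smaller index shifts the position of $\ell_{i'}$ (resp.\ $u_{j'}$) down by exactly one, and in both cases the two inequalities in Definition~\ref{def:good_pair} are preserved verbatim. Your preliminary check that the resulting paths still define an LPM is also correct: the only range where the prefix-count difference could drop is $u_{j'}\le p<\ell_{i'}$ (respectively $u_j\le p<\ell_i$), and there the original gap is at least $j'-(i'-1)\ge 1$ (respectively $j-(i-1)\ge 1$) by the first good-pair inequality. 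One small presentational point: you might note explicitly that if $u_{j'}\ge \ell_{i'}$ (or $u_j\ge \ell_i$) the problematic range is empty and there is nothing to check, so the argument only has content when the range is nonempty.
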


Theorem~\ref{thm:lpm_quotients} along with Lemma~\ref{lem:removing} sheds some light on the structure of chains in an interval $[M',M]_{\leq q}$ in the poset $\mathcal P_n$. In particular,  if $((\ell_{i_1},u_{i_1}), \ldots, (\ell_{i_z},u_{j_z}))$ is the greedy pairing on  $(U\setminus U',L\setminus L')$ then any permutation of the set of pairs $\{(\ell_{i_1},u_{j_1}), \ldots, (\ell_{i_z},u_{j_z})\}$ gives rise to a sequence that is a good pairing. That is, every such permutation corresponds to a saturated chain in the interval $[M',M]_q$.  However, not all saturated chains arise this way.

\subsection{Shelling and EL-labelings} \label{sec:shelling}

To every poset $P$, one can associate an abstract simplicial complex $\Delta(P)$ called the \emph{order complex} of $P$. The vertices of $\Delta(P)$ are the elements of $P$ and the faces of $\Delta(P)$ are the chains (i.e., totally ordered subsets) of $P$.
Recall that a pure $d$-dimensional simplicial complex $\Delta$ is \emph{shellable} if there exists an ordering of its facets $F_1, \dots, F_t$ such that for all $j = 1, \dots t-1$ the complex
\[(\bigcup_{i=1}^j F_i) \cap F_{j+1}\]
\noindent
is pure and has dimension $d-1$. 

We will be interested in shelling simplicial complexes $\Delta = \Delta(P)$ that arise as the order complex of a poset $P$.  In this case the facets of $\Delta$ are given by the set ${\mathcal M}$ of maximal chains of $P$. There is a well-developed theory of `lexicographic shellings' via a labeling of $P$.

More precisely, for a poset $P$ an \emph{edge-labeling} is a map $\lambda:E(H(P)) \rightarrow \Lambda$ from the edge set of the Hasse diagram of $P$ to some poset $\Lambda$. Given such a labeling $\Lambda$, a saturated chain $C = (x_0 \lessdot x_1 \lessdot \cdots \lessdot x_k)$ of length $k$ gives rise to a $k$-tuple $\lambda(C) = (\lambda(x_0 \lessdot x_1), \dots, \lambda(x_{k-1}\lessdot x_k))$. We say that $C$ is \emph{weakly increasing} if $(\lambda(C))_i \leq_\Lambda (\lambda(C))_j$ whenever $i \leq j$. With this edge labeling, the \emph{lexicographic order} on unrefinable chains of length $k$ is defined by $C <_L C^\prime$ if there exists a $j \leq k$ such that $\lambda(C)_i = \lambda(C^\prime)_i$ for all $i < j$ and $\lambda(C)_j <_\Lambda \lambda(C^\prime)_j$.

\begin{definition}\label{def:EL}
Suppose $P$ is a poset with edge-labeling $\lambda:E(H(P)) \rightarrow \Lambda$. Then $\lambda$ is an $\emph{EL-labeling}$ if for every $x \leq y \in P$ the interval $[x,y]$ has the property that:
\begin{itemize}
    \item There exists a unique weakly increasing maximal chain $C$ in $[x,y]$;
    \item $C \leq_L C^\prime$ for all other maximal chains $C^\prime$ in $[x,y]$.
    \end{itemize}
\end{definition}

Work of Bj\"orner and Wachs \cite[Theorem 3.3]{BjoWac} implies that if a poset $P$ admits an EL-shelling then the complex $\Delta= \Delta(P)$ is shellable. In fact any linear ordering of the set ${\mathcal M}$ of maximal chains that extends the lexicographic order induced by $\lambda$ is a shelling order. 

EL-labelings have been especially effective in determining shellability of various classes of lattices and other posets.  For example, upper semimodular lattices \cite{Garsia}, geometric lattices \cite{Stanley74}, geometric semilattices \cite{WachsWalker}, supersolvable lattices \cite{Stanley72}, subgroup lattices of solvable groups \cite{Woodroofe}, and Bruhat order \cite{BjoWac} are all known to be lexicographically shellable. A number of these classes of lattices fit into the class of \emph{(co)modernistic} lattice introduced in \cite{ShweigWoodroofe}, which were shown to admit an EL-labeling in \cite{Li}.

Furthermore, one can use an EL-shelling of $P$ to compute the M\"obius function of $P$.  For this suppose $P$ is a poset of rank $n+1$ and that $\lambda$ is an EL-shelling of $P$. A maximal chain $m: \hat{0} = x_0 \lessdot x_1 \lessdot \dots \lessdot x_n = \hat{1}$ in $P$ is called a \emph{falling chain} if $\lambda(x_{i-1}\lessdot x_i)\nless \lambda(x_i\lessdot x_{i+1})$ for all $0 < i < n$.

\begin{theorem}\cite[Theorem 3.4]{BjoWac}\label{thm:falling}
Suppose a poset $P$ admits an EL-shelling $\lambda$. We have
\[(-1)^{n+1}\mu(\hat{0}, \hat{1}) = |\{m \in {\mathcal M}:\text{$m$ is a falling chain}\}|.\]
\end{theorem}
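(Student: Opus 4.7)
The plan is to derive the formula from two classical ingredients: Philip Hall's theorem, which identifies $\mu(\hat{0},\hat{1})$ with the reduced Euler characteristic of the order complex $\Delta$ of the open interval $(\hat{0},\hat{1})$, and the general principle that if $\Delta$ is shellable, then it is homotopy equivalent to a wedge of spheres counted by the facets with \emph{full restriction} in any shelling. Once both are in hand, proving the theorem reduces to identifying those facets with the falling chains of $P$.

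First I would invoke the already-cited result of Bj\"orner and Wachs: the lexicographic order on maximal chains produced by $\lambda$ is a shelling of $\Delta$. In such a shelling of a pure $d$-dimensional complex, the \emph{restriction} of a facet $F_j$ is the minimal face of $F_j$ not contained in any earlier facet; equivalently, $R(F_j) = \{v \in F_j : F_j \setminus \{v\} \subseteq F_i \text{ for some } i < j\}$, and the complex has the homotopy type of a wedge of $d$-spheres indexed by those $j$ with $R(F_j) = F_j$. I would then show that for a maximal chain $m: \hat{0} = x_0 \lessdot x_1 \lessdot \cdots \lessdot x_n = \hat{1}$, an interior vertex $x_i$ lies in $R(m)$ if and only if $\lambda(x_{i-1}\lessdot x_i)\nless \lambda(x_i\lessdot x_{i+1})$; equivalently, the two-step sub-chain $x_{i-1} \lessdot x_i \lessdot x_{i+1}$ is \emph{not} the unique weakly increasing chain of $[x_{i-1},x_{i+1}]$. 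Consequently $R(m)$ equals the whole open chain precisely when $m$ is a falling chain, and the Euler-characteristic count of a wedge of top-dimensional spheres then yields the claimed signed formula.

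The main obstacle will be the local replacement argument at the heart of this identification. One must verify that whenever $\lambda(x_{i-1}\lessdot x_i)\nless \lambda(x_i\lessdot x_{i+1})$, replacing $x_i$ by the intermediate element of the unique increasing chain in $[x_{i-1},x_{i+1}]$ produces a lex-earlier maximal chain (so $x_i \in R(m)$); conversely, if $\lambda(x_{i-1}\lessdot x_i) < \lambda(x_i\lessdot x_{i+1})$, no lex-decreasing replacement at position $i$ is possible. Both directions follow from the defining EL-property applied to the rank-two interval $[x_{i-1},x_{i+1}]$, together with the observation that altering only $x_i$ changes exactly the two labels $\lambda(x_{i-1}\lessdot x_i)$ and $\lambda(x_i\lessdot x_{i+1})$, so the lex comparison of the modified chain with $m$ is decided entirely within $[x_{i-1},x_{i+1}]$.
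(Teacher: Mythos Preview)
The paper does not supply its own proof of this statement: Theorem~\ref{thm:falling} is simply quoted from Bj\"orner--Wachs \cite[Theorem~3.4]{BjoWac} as a known tool, with no argument given. So there is nothing in the paper to compare your proposal against.

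That said, your sketch is correct and is precisely the standard argument one finds in Bj\"orner--Wachs. The key points are all there: Philip Hall identifies $\mu(\hat{0},\hat{1})$ with the reduced Euler characteristic of the open interval; the lexicographic order on maximal chains is a shelling; in any shelling the homotopy type is a wedge of top spheres indexed by facets whose restriction is the entire facet; and your local replacement argument correctly pins down $R(m)$ as the set of interior $x_i$ at which the label sequence fails to weakly increase. The converse direction is also fine once one notes (as you implicitly do) that every facet containing $m\setminus\{x_i\}$ must arise by replacing $x_i$ with some other element of $(x_{i-1},x_{i+1})$, so lex-minimality of the increasing chain in that rank-two interval really does force $x_i\notin R(m)$.
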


Note that by definition an EL-shelling of $P$ restricts to an EL-shelling of any interval of $P$, thought of as a subposet.  Hence we get a way to compute the entire M\"obius function of $P$ with the given EL-labeling $\Lambda$.

\section{Shellings and and M\"obius functions of \texorpdfstring{$\mathcal{P}_n$}{P}} \label{sec:proofs}

In this section we prove Theorem \ref{thm:EL}, and in particular describe an edge labeling of the poset $\mathcal{P}_n$. We then discuss implications for the M\"obius function of $\mathcal{P}_n$ and also show that our label gives rise to a Whitney dual for  $\mathcal{P}_n$ in next section.

We let $\Lambda$ denote the poset structure on $[n] \times [n]$ given by $(u',\ell') \leq (u, \ell)$ if $u \leq u^\prime$ and $\ell \leq \ell^\prime$. 
If $M =  M[U,L]$ covers $M' = M[U',L']$ in $\mathcal{P}_n$, we label this cover relation by the good pair $(U\setminus U',L\setminus L')\in [n]\times [n]$. 
This defines an edge labeling $\lambda:E(H({\mathcal P}_n)) \rightarrow \Lambda_n$, which we call the \emph{good pair} labeling of ${\mathcal P}_n$.

\begin{theorem}\label{thm:EL}
The good pair labeling $\lambda$ provides an EL-labeling of ${\mathcal P}_n$.  
\end{theorem}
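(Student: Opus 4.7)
The plan is to verify both EL-labeling conditions simultaneously by exhibiting, for each interval $[M',M]$ in $\mathcal{P}_n$, an explicit canonical chain $C^*$ and proving that it is the unique weakly increasing maximal chain and is lexicographically smallest. Writing $U\setminus U'=\{u_{i_1}<\cdots<u_{i_z}\}$ and $L\setminus L'=\{\ell_{j_1}<\cdots<\ell_{j_z}\}$, Theorem \ref{thm:lpm_quotients} asserts that the greedy pairing $((\ell_{j_1},u_{i_1}),\ldots,(\ell_{j_z},u_{i_z}))$ is good in $M$. I would define $C^*$ to be the saturated chain from $M'$ up to $M$ whose $k$-th cover relation carries the label $(u_{i_{z-k+1}},\ell_{j_{z-k+1}})$; that is, $C^*$ inserts the greedy-paired elements from largest index to smallest as one ascends.

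That $C^*$ is a bona fide saturated chain follows from the remark immediately after Theorem \ref{thm:lpm_quotients}: every permutation of the greedy pairing yields a good pairing, hence a saturated chain, and reversal is one such permutation. The chain is weakly increasing almost tautologically: by construction, both coordinates of successive labels strictly decrease, and by the convention $(u',\ell')\leq_\Lambda(u,\ell)\iff u\leq u' \text{ and } \ell\leq\ell'$, strict decrease in coordinates amounts to strict increase in $\Lambda$. For uniqueness, observe that in any saturated chain from $M'$ to $M$ the first coordinates of the $z$ labels form a permutation of $\{u_{i_1},\ldots,u_{i_z}\}$ and the second coordinates a permutation of $\{\ell_{j_1},\ldots,\ell_{j_z}\}$, since each element of $U\setminus U'$ and $L\setminus L'$ is inserted at exactly one cover. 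Weak increasingness in $\Lambda$ then forces both coordinate sequences to be weakly decreasing; distinctness upgrades this to strict decrease, pinning down the label sequence to that of $C^*$, and the sequence of labels determines the chain itself.

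For lexicographic minimality I would induct on $z$. Let $C'$ be any maximal chain in $[M',M]$ and write its first label as $(u^*,\ell^*)\in(U\setminus U')\times(L\setminus L')$. Since $u_{i_z}=\max(U\setminus U')$ and $\ell_{j_z}=\max(L\setminus L')$, we have $u^*\leq u_{i_z}$ and $\ell^*\leq \ell_{j_z}$, which by the defining convention of $\Lambda$ translates to $\lambda(C^*)_1=(u_{i_z},\ell_{j_z})\leq_\Lambda(u^*,\ell^*)=\lambda(C')_1$, with strict inequality whenever the two labels differ. If they differ, $C^*<_L C'$ is immediate; if they agree, both chains pass through the common covering element $M_1=M[U'\cup\{u_{i_z}\},L'\cup\{\ell_{j_z}\}]$, and the induction hypothesis applied to the rank-$(z-1)$ interval $[M_1,M]$ compares the tails. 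The main conceptual step is the passage from Theorem \ref{thm:lpm_quotients} (quotients are characterized by a single greedy pairing on the whole interval) to the saturated-chain structure of intervals in $\mathcal{P}_n$; once the remark after that theorem is in hand, the EL-labeling conditions reduce to order-theoretic bookkeeping on the maxima of the removed sets, and I do not anticipate any serious combinatorial hurdles beyond verifying that each intermediate matroid along $C^*$ is a valid LPM, a fact already implicit in the saturated-chain interpretation used in \cite{BenKna}.
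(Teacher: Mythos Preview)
Your approach is essentially the paper's: exhibit the greedy-paired chain, check it is increasing, and show every other chain is both lex-larger and not increasing. The paper does uniqueness and lex-minimality in one stroke (at the first index where $C'$ deviates, $\lambda(C)_r$ is the $\Lambda$-minimum of the remaining label set, so $\lambda(C)_r<_\Lambda\lambda(C')_r$; and the omitted small coordinate must reappear later in $C'$, destroying increasingness), whereas you separate them (uniqueness by pinning the monotone label sequence, lex-minimality by induction on $z$). These are purely cosmetic differences.

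There is, however, one genuine slip. You take the displayed definition of $\Lambda$ literally as the \emph{reverse} componentwise order and accordingly build $C^*$ so that both coordinates strictly decrease from bottom to top. That displayed line is a typo: the statement marked $(*)$ in the paper's own proof, the sentence ``Clearly, $(\ell_{i_r},u_{j_r})\le_\Lambda(\ell_{i_s},u_{j_s})$ for $r\le s$'', and the later identification of falling chains with $A(\sigma)\cap A(\tau)=\emptyset$ (as well as Lemma~\ref{lem:sigma=w_0}) all force $\Lambda$ to be the \emph{standard} componentwise order on $[n]\times[n]$. Under the intended order your $C^*$ is strictly decreasing---in fact a falling chain---so it cannot be the unique increasing one. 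The repair is trivial: insert the greedy pairs from smallest to largest as you ascend (equivalently, let $\lambda(C^*)$ be the greedy pairing itself, read bottom-up). With that single reversal everything you wrote goes through verbatim and coincides with the paper's argument.
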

\begin{proof}
Let $M'=M[U',L']\leq_q M=M[U,L]\in {\mathcal P}_n$ and let $C$ be the chain in $[M',M]$ such that $\lambda(C)=((\ell_{i_1},u_{j_1}), \ldots, (\ell_{i_z},u_{j_z}))$ is the greedy pairing on $(U\setminus U')\times (L\setminus L')$. 
From \cite[Lemma 16]{BenKna}, we see that such a chain exists.
We show that $C$ is the chain with the properties required in Definition~\ref{def:EL}. Note that for $1\leq r\leq z$ the pair  $\lambda(C)_r$ is the minimum element of $(U\setminus U'\setminus \{u_{j_1}, \ldots, u_{j_{r-1}}\})\times (L\setminus L'\setminus\{\ell_{i_1}, \ldots, \ell_{i_{r-1}}\})$ with respect to $\Lambda$. (*)

Clearly, $(\ell_{i_r},u_{j_r})\leq_{\Lambda}(\ell_{i_s},u_{j_s})$ for all $1\leq r\leq s\leq z$. Hence $C$ is weakly increasing. Suppose $C' \neq C$ is any other maximal chain in $[M',M]$, and let $r$ be the smallest index such that $\lambda(C')_r\neq \lambda(C)_r$. We have $\lambda(C)_r<_{\Lambda} \lambda(C')_r$ by (*) and $C <_L C'$. At the same time, we must have some later $s>r$ where $\lambda(C')_s=(\ell_{i_r},u)$ or $\lambda(C')_s=(\ell,u_{i_r})$. Therefore, we conclude that $\lambda(C')_r\not\leq_{\Lambda}\lambda(C')_s$ and $C'$ is not increasing. Now the claim follows.
\end{proof}

\begin{remark}
When we define LPMs, the data is given by a pair of noncrossing monotone lattice paths $U$ and $L$. We want to point out that the good pairing condition implies that $U$ and $L$ do not cross when we take quotients which can be proved by contradiction.
\end{remark}

\begin{figure}[htp]
    \centering
    \includegraphics[scale=0.5]{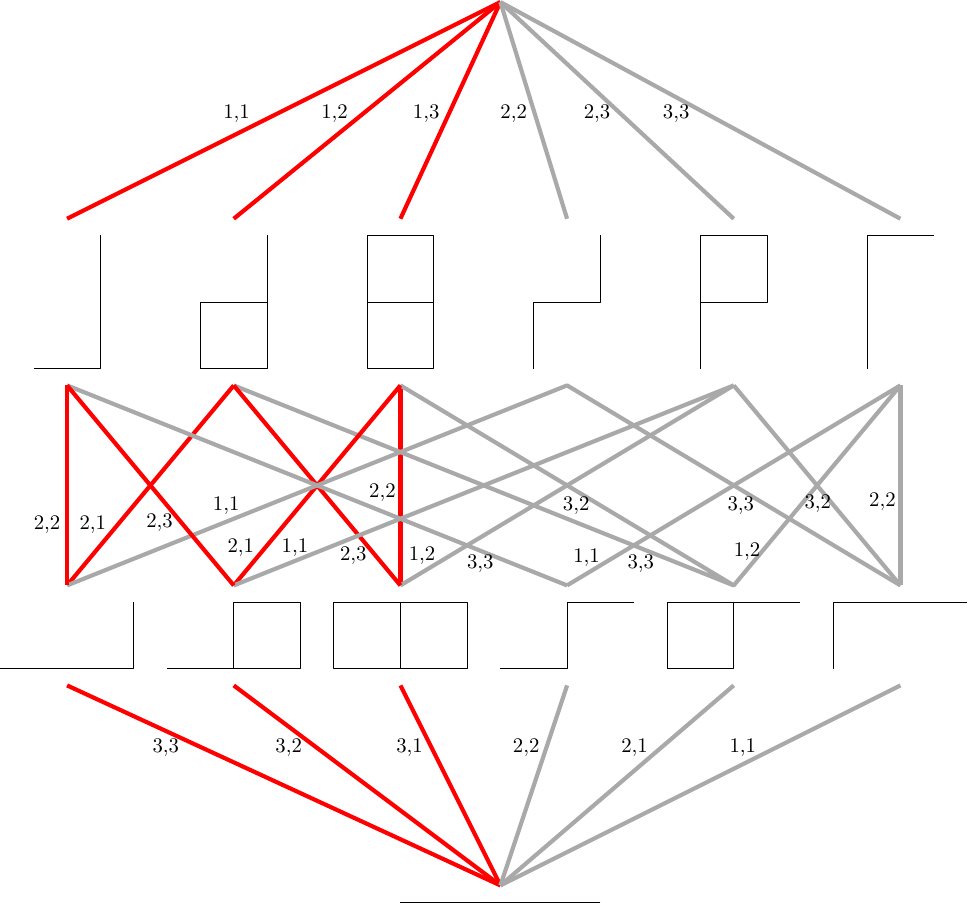}
    \caption{The EL-labeling of ${\mathcal P_3}$, along with six falling chains.}\label{fig:fallingchains}
\end{figure}

\subsection{M\"{o}bius functions}

As we have seen, for a poset with EL shelling $\lambda$, the value of the M\"{o}bius function on an interval can be recovered by counting the number of falling chains in that interval \cite[Theorem~3.4]{BjoWac}. In this section, we provide combinatorial interpretations for these falling chains in order to compute bounds for the M\"obius function.

We first fix our notation. For a maximal chain $m = ((l_1,u_1),(l_2,u_2),\dots,(l_n,u_n))$, we define an ordered pair $(\sigma, \tau)$ of permutations in $S_n$ such that $\sigma(i) = l_i$ and $\tau(i) = u_i$ for all $i$. The maximal chain $m$ can be written as a $2 \times n$ matrix
\[\begin{bmatrix}
    \ell_1 & \ell_2 & \cdots & \ell_{n-1} & \ell_n \\
    u_1 & u_2 & \cdots & u_{n-1} & u_n
  \end{bmatrix}_{\textstyle \raisebox{2pt}{.}}
\]

\begin{definition}
The \emph{Lehmer code} $L(w)$ of $w\in S_n$ is the $n$-tuple 
\[(L_1(w),L_2(w),L_3(w),\dots, L_n(w))\]
where $L_j(w) = |\{k > j: w(k) < w(j)\}|$. We also denote \[L'_j(w) := L_j(w^{-1}) = |\{k > j: w^{-1}(k) < w^{-1}(j)\}|\] and $L'(w) = (L'_1(w),L'_2(w),L'_3(w),\dots, L'_n(w))$.
\end{definition}

Clearly, we have $l(w) = \sum_{j=1}^n L_j$ and $|\{k < j: w(k) < w (j)\}| = w(j)- 1 - L_j(w)$. We make the following observation.

\begin{lemma}\label{lem:Lehmer}
For any permutation $w \in S_n$, we have $L_j(w) - L'_{w(j)}(w) = w(j) - j$ for all $j$.
\end{lemma}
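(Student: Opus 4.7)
The plan is to establish the identity by computing the single quantity $|\{k < j : w(k) < w(j)\}|$ in two different ways --- once yielding an expression involving $L_j(w)$ and once yielding an expression involving $L'_{w(j)}(w)$ --- and equating the results.

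For the first expression, I would invoke the remark stated just before the lemma: of the $n-1$ indices $k \neq j$, exactly $w(j) - 1$ satisfy $w(k) < w(j)$ (since $w$ is a bijection onto $[n]$). Splitting these according to whether $k < j$ or $k > j$ and using the definition of $L_j(w)$ immediately gives
\[|\{k < j : w(k) < w(j)\}| = w(j) - 1 - L_j(w).\]

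For the second expression, I would apply that very same observation, but now to the permutation $w^{-1}$ at the index $i := w(j)$, using that $w^{-1}(w(j)) = j$. This yields
\[|\{a < w(j) : w^{-1}(a) < j\}| = j - 1 - L_{w(j)}(w^{-1}) = j - 1 - L'_{w(j)}(w).\]
Then I would reindex the set on the left by the substitution $m := w^{-1}(a)$, equivalently $a = w(m)$: the inequality $a < w(j)$ becomes $w(m) < w(j)$, and $w^{-1}(a) < j$ becomes $m < j$. Hence the reindexed set is precisely $\{m < j : w(m) < w(j)\}$, which coincides with the set whose cardinality was computed in the first step.

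Equating the two expressions for this common cardinality gives
\[w(j) - 1 - L_j(w) = j - 1 - L'_{w(j)}(w),\]
which rearranges to the claimed identity $L_j(w) - L'_{w(j)}(w) = w(j) - j$. There is no substantial obstacle --- the lemma is essentially the symmetry between a permutation and its inverse, i.e.\ the exchange between positions and values --- and the only point requiring care is the bookkeeping in the reindexing step $a \leftrightarrow w^{-1}(a)$, which is entirely formal.
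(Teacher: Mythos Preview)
Your proof is correct and follows essentially the same approach as the paper: both compute the single quantity $|\{k<j: w(k)<w(j)\}|$ in two ways, once using the remark preceding the lemma and once via $L'_{w(j)}(w)$ through a bijective reindexing between positions and values. The only cosmetic difference is that the paper first manipulates $L'_j(w)$ and then substitutes $j\mapsto w(j)$, whereas you apply the remark directly to $w^{-1}$ at the index $w(j)$; the underlying argument is the same.
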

\begin{proof}
Note that
\begin{align*}
    L'_j(w) &= |\{k > j: w^{-1}(k) < w^{-1}(j)\}|\\
            &= |\{k < w^{-1}(j): w(k) > j \}|\\
            &= w^{-1}(j) - 1 - |\{k < w^{-1}(j): w(k)< j)\}|.
\end{align*}
Since the equations are true for all $j$, we can substitute $j$ by $w(j)$ and then we get $|\{k < j: w(k) < w (j)\}| = j - 1 - L'_{w(j)}(w) = w(j) -1 - L_j(w).$
\end{proof}

\begin{example}
As an example, the element $w = (3 \; 1 \;5 \;4 \; 2) \in S_5$ has Lehmer code $L(w) = (2,0,2,1,0)$. Note that $w^{-1} = (2 \; 5 \; 1 \; 4 \; 3)$, so that $L^\prime(w) = (1,3,0,1,0)$. Note that $L_1(w) - L^\prime_3(w) = 2 - 0 = 2 = 3 - 1 = w(1) - 1$.
\end{example}

We next translate both the good pairing condition in Definition \ref{def:good_pair} and the falling chain condition into the language of permutations. Suppose $m$ is a maximal chain in ${\mathcal P}_n$ and let $\sigma, \tau \in S_n$ be the corresponding permutations defined above. Then the falling chain condition $(l_i, u_i) \nless (l_{i+1}, u_{i+1})$ is equivalent to saying that $A(\sigma) \cap A(\tau) = \emptyset$, where the \emph{ascent set} $A(\sigma)$ is defined as $A(\sigma) = \{i : \sigma(i) < \sigma(i+1)\} \subseteq [n-1]$. 

Now, to translate the good pairing condition in terms of $\sigma$ and $\tau$, we suppose $((l_1,u_1),(l_2,u_2),\dots,(l_n,u_n))$ is a maximal chain of good pairings. For all $j \in [n]$, we have
\begin{equation}\label{good_pairing_max_chain}
    \begin{split}
    |\{k > j: \sigma(k) < \sigma(j)\}| &\geq |\{k > j: \tau(k) < \tau(j)\}| \\
    \sigma(j) -  |\{k > j: \sigma(k) < \sigma(j)\}| &\leq \tau(j) - |\{k > j: \tau(k) < \tau(j)\}|
    \end{split}
\end{equation}

From Lemma \ref{lem:Lehmer} we have $w(j) - L_j(w) = j - L'_{w(j)}(w)$ for any $w \in S_n$. Hence we can rewrite the equation (\ref{good_pairing_max_chain}) as follows. For all $j \in [n]$, we have
\begin{equation}\label{def:good_pair_perm}
    \begin{split}
    L_j(\sigma) &\geq L_j(\tau)  \\
    L'_{\sigma(j)}(\sigma) &\geq L'_{\tau(j)}(\tau) 
    \end{split}
\end{equation}
\begin{definition}
We say a pair of permutations $(\sigma,\tau)\in S_n \times S_n$ is a \emph{good pair} if it satisfies equations (\ref{def:good_pair_perm}). We denote the set of all good pairs of permutations in $S_n$ as $GP(n)$.
\end{definition}

This discussion, along with Theorem \ref{thm:falling}, then proves the following.

\begin{theorem} \label{thm:goodpairs}
$\mu(U_{0,n},U_{n,n}) = |\{(\sigma,\tau) \in GP(n): A(\sigma)\cap A(\tau) = \emptyset\}|$.
\end{theorem}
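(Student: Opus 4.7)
The plan is to assemble the theorem as a direct consequence of Theorem \ref{thm:EL} and Theorem \ref{thm:falling}, using the translation from chains to pairs of permutations that has already been set up in the preceding discussion. First I would observe that $U_{0,n}$ and $U_{n,n}$ are the minimum and maximum of $\mathcal{P}_n$ (having $U=L=\emptyset$ and $U=L=[n]$, respectively), so the interval in question is all of $\mathcal{P}_n$ and each maximal chain from $U_{0,n}$ to $U_{n,n}$ consists of exactly $n$ cover relations. As one traverses the chain upward, each element of $[n]$ is added exactly once to $U$ and once to $L$, so the label sequences $(\ell_1,\dots,\ell_n)$ and $(u_1,\dots,u_n)$ define permutations $\sigma,\tau\in S_n$. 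This yields an injection from the set of maximal chains into $S_n\times S_n$.

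Next I would identify the image of this injection. By Theorem \ref{thm:lpm_quotients} a saturated chain is valid precisely when, at every step, the pair being added is a good pair of the current LPM. Translating Definition \ref{def:good_pair} coordinate by coordinate yields the two inequalities displayed in (\ref{good_pairing_max_chain}); applying Lemma \ref{lem:Lehmer} to both sides of each inequality then rewrites these in terms of Lehmer codes, producing exactly the defining inequalities (\ref{def:good_pair_perm}) of $GP(n)$. Hence the maximal chains of $\mathcal{P}_n$ are in bijection with pairs $(\sigma,\tau)\in GP(n)$.

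Finally, I would import the observation recorded immediately above the theorem statement: the falling condition $(\ell_i,u_i)\nless_\Lambda (\ell_{i+1},u_{i+1})$ is equivalent to not simultaneously having $\sigma(i)<\sigma(i+1)$ and $\tau(i)<\tau(i+1)$, i.e., to $i\notin A(\sigma)\cap A(\tau)$ for each $i$. Demanding this at every index says exactly that $A(\sigma)\cap A(\tau)=\emptyset$. Combining the two bijections above with Theorem \ref{thm:EL} (which tells us $\lambda$ is an EL-labeling, so Theorem \ref{thm:falling} applies) immediately gives the claimed enumeration.

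The only step requiring genuine care is the verification that the good pairing condition along the entire saturated chain is equivalent to the single pair of Lehmer-code inequalities defining $GP(n)$: a priori one would need to check the good pair condition at every cover along the chain, but the point is that the conditions at later covers are implied by the inequalities at the endpoints once one uses Lemma \ref{lem:Lehmer} and the fact that removing an already-paired $(\ell,u)$ from $L$ and $U$ preserves goodness of the remaining pairs (which was essentially recorded in Lemma \ref{lem:removing}). Once this is in hand, the theorem is simply a translation of Theorem \ref{thm:falling} into the language of permutations.
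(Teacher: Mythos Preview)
Your proposal is correct and follows essentially the same route as the paper: the paper's proof is literally the sentence ``This discussion, along with Theorem~\ref{thm:falling}, then proves the following,'' where the discussion is precisely the translation you outline (maximal chains $\leftrightarrow$ pairs $(\sigma,\tau)\in GP(n)$, falling condition $\leftrightarrow$ $A(\sigma)\cap A(\tau)=\emptyset$, then apply Theorems~\ref{thm:EL} and~\ref{thm:falling}).

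One small correction to your final paragraph: there is no reduction from ``conditions at every cover'' to ``conditions at the endpoints.'' The inequalities~(\ref{def:good_pair_perm}) are indexed by \emph{all} $j\in[n]$, and for each fixed $j$ they are exactly the good pair condition for the cover at step $j$ (the current $L$ and $U$ at that step are $\{\sigma(1),\dots,\sigma(j)\}$ and $\{\tau(1),\dots,\tau(j)\}$, and the index of $\sigma(j)$ in this $L$ works out to $\sigma(j)-L_j(\sigma)$, etc.). So the equivalence between ``the sequence is a valid chain'' and ``$(\sigma,\tau)\in GP(n)$'' is a direct, step-by-step translation; Lemma~\ref{lem:removing} is not needed here.
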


\subsection{Counting falling chains}

The goal of this subsection is to count falling chains in the interval $(U_{0,n},U_{n,n})$ under the EL-labeling described above. We first observe a kind of duality of falling chains.

\begin{lemma}\label{lem:dual}
    If $m = ((l_1,u_1),(l_2,u_2),\dots,(l_n,u_n))$ is a falling chain, then 
    \[m^* = ((n+1-l_n, n+1-u_n), \dots, (n+1-l_1, n+1-u_1))\]
    \noindent
    is also a falling chain in $\mathcal{P}_n$.
\end{lemma}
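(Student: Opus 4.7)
My plan is to translate the operation $m \mapsto m^*$ into the permutation language established just before the lemma. If $m$ corresponds to the pair $(\sigma, \tau)$ with $\sigma(i) = \ell_i$ and $\tau(i) = u_i$, then reading off the coordinates of $m^*$ gives $\sigma^*(i) = n+1 - \sigma(n+1-i)$, and similarly for $\tau^*$. Writing $w_0$ for the longest permutation in $S_n$, this is exactly $(\sigma^*, \tau^*) = (w_0 \sigma w_0,\, w_0 \tau w_0)$. It therefore suffices to check that conjugation by $w_0$ preserves both the good pair condition in equation~(\ref{def:good_pair_perm}) (so that $m^*$ is a valid maximal chain) and the falling condition $A(\sigma^*) \cap A(\tau^*) = \emptyset$.

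The falling part will be immediate. A one-line computation gives $i \in A(\sigma^*)$ iff $n - i \in A(\sigma)$, so $A(\sigma^*)$ is just the reflection of $A(\sigma)$ in $[n-1]$, and likewise for $\tau$; since reflection is a bijection on $[n-1]$, disjointness transfers.

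For the good pair condition I plan first to rewrite (\ref{def:good_pair_perm}) in a form that exposes the $w_0$-symmetry. Introducing the left-inversion count $R_j(w) := |\{k < j : w(k) > w(j)\}|$, the elementary identity $(j - 1 - R_j(w)) + L_j(w) = w(j) - 1$ combined with Lemma~\ref{lem:Lehmer} yields $L'_{w(j)}(w) = R_j(w)$. Hence the good pair condition becomes the symmetric statement $L_j(\sigma) \geq L_j(\tau)$ and $R_j(\sigma) \geq R_j(\tau)$ for every $j$. A routine change of variable $k \leftrightarrow n+1-k$ then produces the swap identities $L_j(w_0 w w_0) = R_{n+1-j}(w)$ and $R_j(w_0 w w_0) = L_{n+1-j}(w)$, so the good pair condition for $(\sigma^*, \tau^*)$ at index $j$ is precisely the good pair condition for $(\sigma, \tau)$ at index $n+1-j$, and the required invariance follows.

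The main obstacle I anticipate is purely bookkeeping: two permutations are conjugated by $w_0$ while the chain coordinates are read backwards, so several index reflections must be tracked simultaneously. Passing to the symmetric $(L, R)$ formulation, rather than working directly with $L$ and $L'$, is what makes the $w_0$-action transparent (it merely swaps $L$ and $R$ while reflecting the index), and avoids a considerably more tangled direct verification.
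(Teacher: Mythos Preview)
Your proof is correct and follows essentially the same approach as the paper: translate $m\mapsto m^*$ into conjugation by $w_0$ on the pair $(\sigma,\tau)$, dispose of the falling condition by a symmetry argument, and then verify the Lehmer-code form~(\ref{def:good_pair_perm}) of the good pair condition for $(w_0\sigma w_0,\,w_0\tau w_0)$. The only difference is packaging: the paper computes $L_j(w_0 w w_0)$ directly as $(n{+}1{-}j)-w(n{+}1{-}j)+1+L_{n+1-j}(w)$ and reads off both inequalities from that, whereas you introduce $R_j(w)$, rewrite the good pair condition as the symmetric pair $L_j(\sigma)\geq L_j(\tau)$ and $R_j(\sigma)\geq R_j(\tau)$, and then use the swap identities $L_j(w_0ww_0)=R_{n+1-j}(w)$, $R_j(w_0ww_0)=L_{n+1-j}(w)$ --- a slightly tidier bookkeeping of the same computation.
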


\begin{proof}
    Since $(l_i, u_i) \nless (l_{i+1},u_{i+1})$, we have $(n+1-l_{i+1}, n+1-u_{i+1})\nless (n+1-l_i,n+1-u_i)$. It suffices to show that $m^*$ is a chain in $\mathcal{P}_n$. We will use Lehmer code to prove this statement. If $(w,v)\in GP(n)$ gives the maximal chain $m$, then $m^*$ is given by $(\sigma w \sigma, \sigma v \sigma)\in S_n \times S_n$ where $\sigma$ represents the longest permutation in $S_n$. By the definition of Lehmer code, we have for all $j$ 
    \begin{equation}
    \begin{split}
    L_j(\sigma w\sigma) &= |\{k>j| w(n+1-k) > w(n+1-j)\}| \\
    &= |\{k < n+1-j| w(k) > w(n+1-j)\}| \\
    &= (n+1-j) - w(n+1-j) +1 +L_{n+1-j}(w).
\end{split}
\end{equation}
    
    Thus, we conclude $L_j(\sigma w\sigma) \geq L_j(\sigma v\sigma)$. We can also compute $\sigma w\sigma(j) - L_j(\sigma w\sigma) = [n+1 - w(n+1-j)] - [(n+1-j) - w(n+1-j) +1 +L_{n+1-j}(w)] = j -1 -L_{n+1-j}(w)$. This gives us $\sigma w\sigma(j) - L_j(\sigma w\sigma) \leq \sigma v\sigma(j) - L_j(\sigma v\sigma)$ which implies $L'_{\sigma w \sigma(j)}(\sigma w \sigma) \geq L'_{\sigma v \sigma (j)}(\sigma v \sigma)$.
\end{proof}

\begin{lemma}\label{lem:l_n=1}

Let $m = ((l_1,u_1),(l_2,u_2),\dots,(l_n,u_n))$ be a maximal chain in $\mathcal{P}_n$. If $m$ is a falling chain, then $l_n = 1$ and $l_1 = n$.
\end{lemma}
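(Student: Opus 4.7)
The plan is to prove both claims by contradiction, exploiting the permutation translations just established: the good pair condition on $(\sigma,\tau)$ amounts to the two Lehmer-code inequalities $L_j(\sigma)\geq L_j(\tau)$ and $L'_{\sigma(j)}(\sigma)\geq L'_{\tau(j)}(\tau)$ of (\ref{def:good_pair_perm}), while the falling chain condition is equivalent to the disjoint ascent sets $A(\sigma)\cap A(\tau)=\emptyset$.

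For the first claim $l_n=1$, I will suppose instead that $\sigma(j)=1$ for some $j<n$ and derive a contradiction. Then trivially $\sigma(j)=1<\sigma(j+1)$, so $j\in A(\sigma)$. On the other hand, evaluating the Lehmer code at $j$ gives $L_j(\sigma)=|\{k>j:\sigma(k)<1\}|=0$, so the first inequality in (\ref{def:good_pair_perm}) forces $L_j(\tau)=0$. Unpacking this, every $\tau(k)$ with $k>j$ exceeds $\tau(j)$; in particular $\tau(j)<\tau(j+1)$, so $j\in A(\tau)$ as well. This violates $A(\sigma)\cap A(\tau)=\emptyset$, establishing $\sigma(n)=1$.

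For the second claim $l_1=n$, the quickest route is to invoke Lemma \ref{lem:dual}: the dual chain $m^*$ is again a falling chain, and its first permutation $\sigma^*$ is readily seen to satisfy $\sigma^*(n)=n+1-\sigma(1)$, so applying the first claim to $m^*$ yields $\sigma(1)=n$. Alternatively, one can run the symmetric direct argument on the second inequality of (\ref{def:good_pair_perm}): if $\sigma(J)=n$ with $J>1$ then $J-1\in A(\sigma)$, while $L'_n(\sigma)=0$ forces $L'_{\tau(J)}(\tau)=0$, meaning no value greater than $\tau(J)$ can appear in a position $<J$; hence $\tau(J-1)<\tau(J)$, placing $J-1\in A(\tau)$ and yielding the same contradiction.

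The only conceptual hurdle is identifying which of the two good pair inequalities to use at which critical index; once one recognizes that $L_j(\sigma)$ is forced to vanish precisely when $\sigma(j)=1$ and $L'_{\sigma(j)}(\sigma)$ vanishes precisely when $\sigma(j)=n$, both parts collapse to one-line contradictions and no machinery beyond the translations already in hand is needed.
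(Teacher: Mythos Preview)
Your proof is correct and follows essentially the same route as the paper's: both suppose $\sigma(j)=1$ for some $j<n$, use the good pair condition at position $j$ to force $\tau(k)>\tau(j)$ for all $k>j$ (the paper phrases this geometrically as ``$u_j$ must lie in the first column,'' you phrase it as $L_j(\sigma)=0\Rightarrow L_j(\tau)=0$), and then note that the resulting common ascent at $j$ contradicts the falling condition. Both arguments then deduce $l_1=n$ from Lemma~\ref{lem:dual}; your alternative direct argument via $L'_n(\sigma)=0$ is a pleasant symmetric bonus.
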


\begin{proof}
Suppose $l_n \neq 1$ and $l_i = 1$ for some $i < n$. Since $(l_i,u_i)$ is a good pairing, the vertical segment corresponding to $u_i$ must place at first column, which implies $u_k > u_i$ for all $k > i$. In particular, we have $u_{i+1} > u_i$. However, this contradicts with the falling chain condition since $(l_i, u_i) < (l_{i+1}, u_{i+1})$. The fact that $l_1= n$ follows from Lemma $\ref{lem:dual}$.
\end{proof}

\begin{definition}\label{def:Mk1}
Let $M_{k,1}$ denote the rank $n-1$ matroid $M[[n]\setminus \{k\}, [n]\setminus \{1\}]$. We denote the set of all falling chains in $[U_{0,n} , M_{k,1}]$ by $FC_k$.
\end{definition}

Note that since $l_n = 1$ for all falling chains from Lemma \ref{lem:l_n=1}, to compute all falling chains in ${\mathcal P}_n$ we only need to consider those in the interval $[U_{0,n} , M_{k,1}]$. Also note that $[U_{0,n},M_{1,1}] \cong {\mathcal P}_{n-1}$.

\begin{theorem}\label{thm:FC_a>FC_b}
    
We have $|FC_a| \geq |FC_b|$ whenever $a\geq b$.

\end{theorem}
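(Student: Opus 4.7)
I will establish the stronger one-step inequality $|FC_b| \leq |FC_{b+1}|$ for every $b \in [n-1]$, from which the theorem follows by iteration. By Lemma~\ref{lem:l_n=1} and the discussion after Definition~\ref{def:Mk1}, any falling chain in $FC_k$ extends uniquely to a falling chain in ${\mathcal F}_n$ by appending the cover label $(1,k)$; under the bijection $m \leftrightarrow (\sigma,\tau) \in S_n \times S_n$, this identifies $FC_k$ with the set of good pairs $(\sigma,\tau) \in GP(n)$ satisfying $A(\sigma) \cap A(\tau) = \emptyset$, $\sigma(n)=1$, and $\tau(n)=k$. I construct an injection $\Phi \colon FC_b \hookrightarrow FC_{b+1}$ by swapping the values $b$ and $b+1$ inside $\tau$: given $(\sigma,\tau)$ encoding $m \in FC_b$, let $j$ be the unique index with $\tau(j)=b+1$, and define $\tau' := \tau \cdot (j\ n)$, so that $\tau'(j)=b$, $\tau'(n)=b+1$, and $\tau'$ agrees with $\tau$ elsewhere. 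Injectivity of $\Phi$ is immediate, since $\tau$ can be recovered from $\tau'$ by swapping the unique position holding $b$ with position $n$.

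The substantive content is verifying $(\sigma,\tau') \in GP(n)$ and $A(\sigma) \cap A(\tau') = \emptyset$. For the good pair condition~(\ref{def:good_pair_perm}), a direct inversion-count calculation yields $L_i(\tau') = L_i(\tau)$ for $i \neq j$ and $L_j(\tau') = L_j(\tau) - 1$, preserving the inequalities $L_i(\sigma) \geq L_i(\tau')$. For the dual inequalities $L'_{\sigma(i)}(\sigma) \geq L'_{\tau'(i)}(\tau')$, I check that $L'_v(\tau') = L'_v(\tau)$ for $v \notin \{b,b+1\}$ (the swap merely permutes the contributions of $b$ and $b+1$ in the count for $v<b$ and is invisible for $v>b+1$), while $L'_b(\tau') = L'_{b+1}(\tau)$ and $L'_{b+1}(\tau') = n-b-1$. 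Comparing position by position, the inequality at index $j$ translates to the same inequality with $b$ replacing $b+1$, and the new condition at index $n$ reduces to $L'_1(\sigma) \geq n-b-1$, which holds because $\sigma(n)=1$ forces $L'_1(\sigma) = n-1$.

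For the falling condition, note that $A(\tau')$ can differ from $A(\tau)$ only at positions in $\{j-1,\,j,\,n-1\}$. Since $b$ and $b+1$ are consecutive integers and neither appears at the neighboring positions (by injectivity of $\tau$ and the placement of $b,b+1$ at $n$ and $j$), the conditions ``$\tau(\cdot) < b$'' and ``$\tau(\cdot) < b+1$'' are equivalent at $i = j-1$ and $i = n-1$, and in the subcase $j+1 < n$ the value $\tau(j+1) \notin \{b,b+1\}$ also makes the ascent status at $j$ coincide in $\tau$ and $\tau'$. The only exception is $j = n-1$, where $n-1$ becomes a new ascent of $\tau'$; but $n-1 \notin A(\sigma)$ because $\sigma(n)=1$ is minimal, so disjointness is preserved. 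I expect the main obstacle to be the clean bookkeeping of the $L'_v$ computation, particularly the symmetry that makes $L'_v(\tau') = L'_v(\tau)$ for $v < b$; once this is settled, $\Phi$ is a well-defined injection, $|FC_b| \leq |FC_{b+1}|$ follows, and iterating yields $|FC_a| \geq |FC_b|$ whenever $a \geq b$.
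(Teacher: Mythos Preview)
Your proof is correct and follows essentially the same approach as the paper. Both arguments construct an injection $FC_b \hookrightarrow FC_{b+1}$ by swapping the values $b$ and $b+1$ in the $u$-component (the paper phrases this via the map $\phi_b$ on labels, you phrase it as $\tau \mapsto \tau'$ on the associated permutation), extend to a full chain in $\mathcal{P}_n$, and verify the good pair and falling conditions through the Lehmer code inequalities~(\ref{def:good_pair_perm}); your write-up simply spells out the computation of $L_i(\tau')$ and $L'_v(\tau')$ and the edge case $j=n-1$ in more detail than the paper does.
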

\begin{proof}
    We show that $|FC_{a+1}| \geq |FC_a|$ for any $a$. Let $\phi_a: \mathbb{N}\setminus \{a\} \rightarrow \mathbb{N}\setminus \{a+1\}$ by $\phi_a(a+1) = a$, and $\phi_a(k) =k$ otherwise. For a maximal falling chain, $$m = ((l_1,u_1),(l_2,u_2),\dots,(l_{n-1},u_{n-1})) \in FC_a,$$ we define $\iota(m) = ((l_1,\phi_a(u_1)),(l_2,\phi_a(u_2)),\dots,(l_{n-1},\phi_a(u_{n-1})))$. Clearly, $\iota(m)$ satisfies the falling condition, so it suffices to show that $\iota(m)$ is actually a chain in $[U_{0,n} , M_{a+1,1}]$. We prove it by showing that $\iota(m)$ can be extended to a chain $$((l_1,\phi_a(u_1)),(l_2,\phi_a(u_2)),\dots,(l_{n-1},\phi_a(u_{n-1})), (1,a+1))$$ in $\mathcal{P}_n$. Since $((l_1,u_1),(l_2,u_2),\dots,(l_{n-1},u_{n-1}),(1,a))$ as the extension of $m$ in $\mathcal{P}_n$ is a maximal falling chain, we have $L_j(w) \geq L_j(v)$ and $ L'_{w(j)}(w) \geq L'_{v(j)}(v)$ for $(w,v)\in S_n\times S_n$ corresponding to this chain. Define $v'\in S_n$ such that $v'(i) = \phi_a(u_i)$ for $i<n$ and $v'(n) = a+1$. By a straight computation, we have $L_j(v) \geq L_j(v')$ and $L'_{v(j)} = L'_{v'(j)}$. Thus, the extension of $\iota(m)$ given by the pair $(w,v')$ is a maximal chain in $\mathcal{P}_n$.
\end{proof}

In what follows, we let ${\mathcal F}_n$ denote the set of maximal falling chains in ${\mathcal P}_n$, under the EL-labeling described above. Then we have the following observation.
\begin{corollary}\label{cor:lowerbound}
For $n \geq 2$, we have $|{\mathcal F}_{n+1}| \geq (n+1) |{\mathcal F}_n|$.
   \end{corollary}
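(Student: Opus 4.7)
The plan is to decompose $\mathcal{F}_{n+1}$ according to which coatom of $\mathcal{P}_{n+1}$ the falling chain passes through at its top cover, and then combine Theorem~\ref{thm:FC_a>FC_b} with the isomorphism $[U_{0,n+1}, M_{1,1}] \cong \mathcal{P}_n$ noted after Definition~\ref{def:Mk1}.

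First I would apply Lemma~\ref{lem:l_n=1} inside $\mathcal{P}_{n+1}$: every $m \in \mathcal{F}_{n+1}$ has $\ell_{n+1} = 1$, so the top cover of $m$ must be $M_{k,1} \lessdot U_{n+1,n+1}$ with label $(1,k)$ for some $k \in [n+1]$. Deleting that top cover produces a chain in $[U_{0,n+1}, M_{k,1}]$ whose labels still satisfy the falling condition, so it lies in $FC_k$. I would then show that this deletion is a bijection between the falling chains of $\mathcal{F}_{n+1}$ whose penultimate matroid is $M_{k,1}$ and the set $FC_k$. For the inverse, given any chain of $FC_k$ the values $\ell_1,\dots,\ell_n$ exhaust $\{2,\dots,n+1\}$; in particular $\ell_n \geq 2$. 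Since $(\ell_n,u_n) <_\Lambda (1,k)$ would force $\ell_n \leq 1$, the extension by the cover $M_{k,1} \lessdot U_{n+1,n+1}$ is automatically falling, so the inverse is well-defined. This yields the partition
\[|\mathcal{F}_{n+1}| = \sum_{k=1}^{n+1} |FC_k|.\]

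Next, Theorem~\ref{thm:FC_a>FC_b} (applied in $\mathcal{P}_{n+1}$) gives $|FC_k| \geq |FC_1|$ for each $k \in [n+1]$, hence $|\mathcal{F}_{n+1}| \geq (n+1)|FC_1|$. Finally, the isomorphism $[U_{0,n+1}, M_{1,1}] \cong \mathcal{P}_n$ is implemented by relabeling the ground set $\{2,\dots,n+1\}$ of $M_{1,1}$ to $[n]$ via $i \mapsto i-1$; this shift is an order isomorphism of the corresponding label posets and so sends falling chains to falling chains, giving $|FC_1| = |\mathcal{F}_n|$. Combining these observations delivers the desired inequality.

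The only delicate point is the bijection in the first step; the rest is bookkeeping with the partition and the isomorphism. Once one notices that the $\ell$-coordinates along any chain of $FC_k$ must be a permutation of $\{2,\dots,n+1\}$ (so $\ell_n \neq 1$), the extension condition becomes automatic and no further comparison needs to be checked, which is why the same argument does not require control over $k$ beyond the monotonicity provided by Theorem~\ref{thm:FC_a>FC_b}.
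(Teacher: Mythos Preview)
Your proof is correct and follows essentially the same approach as the paper: decompose $\mathcal{F}_{n+1}$ according to the coatom $M_{k,1}$ using Lemma~\ref{lem:l_n=1}, apply Theorem~\ref{thm:FC_a>FC_b} to bound each summand below by $|FC_1|$, and identify $|FC_1|$ with $|\mathcal{F}_n|$ via the isomorphism $[U_{0,n+1},M_{1,1}]\cong \mathcal{P}_n$. Your write-up is in fact more careful than the paper's, since you explicitly verify the bijection between $\{m\in\mathcal{F}_{n+1}: m \text{ passes through } M_{k,1}\}$ and $FC_k$ (in particular, that the extension of a chain in $FC_k$ by the top edge is automatically falling because $\ell_n\geq 2$), whereas the paper takes the equality $|\mathcal{F}_{n+1}|=\sum_a |FC_a|$ for granted from the remark after Definition~\ref{def:Mk1}.
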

\begin{proof}
We have the following relations
\[
|\mathcal{F}_{n+1}| = \sum_{a=1}^{n+1}|FC_a| \geq (n+1)|FC_1| = (n+1)|\mathcal{F}_n|
\]
where the last equality follows from the fact that the interval $[U_{0,0},M_{1,1}]$ in $\mathcal{P}_{n+1}$ is isomorphic to $\mathcal{P}_n$.
\end{proof}

We were able to compute the $|{\mathcal F}_n|$ for $n \leq 9$. We have listed these values below, along with the set of all maximal chains in ${\mathcal P}_n$, which correspond to the set of full lattice path flag matroids, denoted LPFMs.
The number of full positroid flag matroids (PFMs) is listed in the last row. None of these sequences appears in the OEIS.

\begin{center}
\begin{tabular}{ c  | c c c c c c c c c c}
 \text{$n$}     & 1 & 2 & 3 & 4 & 5 & 6 & 7 & 8 & 9 \\
 \hline
 \text{falling} & 1 & 2 & 6 & 25 & 140 & 1031 & 9784 & 117212 & 1737600 \\
 \text{all LPFM} & 1 & 3 & 17 & 152 & 1949 & 33774 & 759391 & 21499164 & ? \\
  \text{all nonnegative} & 1 & 3 & 19 & 232 & 4013 & 102420 & 3653339 & ? & ? \\
\end{tabular}
\end{center}

We next wish to enumerate elements in ${\mathcal F}_n$ according the structure of the underlying permutations.
As we have seen, a maximal chain in $\mathcal{P}_n$ gives rise to an ordered pair of permutations $(\sigma, \tau)$.   Our goal is to determine which pairs of permutations arise this way. Given a pair $(\sigma, \tau)$ of permutations in $S_n$, we let 
\[C_{(\sigma,\tau)}  = \big((\sigma(1),\tau(1)), (\sigma(2), \tau(2)), \dots (\sigma(n),\tau(n))\big)\]
\noindent
denote the (potential) chain in the poset ${\mathcal P}_n$.

We fix $\sigma \in S_n$ and denote
\[C_\sigma = \{C_{(\sigma,\tau)} \mid C_{(\sigma,\tau)} \text{ is a falling chain of } \mathcal{P}_n \text{ for some } \tau \in S_n\}.
\]

\begin{lemma}\label{lem:sigma=w_0}
    Let $w_0$ represent the longest permutation in the symmetric group. Then the chains $C_{(w_0,\tau)}$ are falling chain of ${\mathcal P}_n$ for all $\tau\in S_n$.
\end{lemma}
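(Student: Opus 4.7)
The plan is to verify the two criteria that characterize falling chains in the permutation language established just above the lemma: namely (i) the good pair condition from equations~(\ref{def:good_pair_perm}), i.e.\ $(w_0,\tau)\in GP(n)$, and (ii) the disjoint-ascent-sets condition $A(w_0)\cap A(\tau)=\emptyset$. Once both hold for every $\tau\in S_n$, the claim is immediate from Theorem~\ref{thm:goodpairs} (applied cover-by-cover to intervals determined by the chain).

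Condition (ii) is essentially free: since $w_0(j)=n+1-j$ is strictly decreasing, $A(w_0)=\emptyset$, so $A(w_0)\cap A(\tau)=\emptyset$ for \emph{every} $\tau$. The heart of the argument is therefore condition (i), which I would handle by computing the Lehmer statistics of $w_0$ explicitly. Every entry to the right of position $j$ is smaller than $w_0(j)$, giving $L_j(w_0)=n-j$. Since $w_0^{-1}=w_0$, we also have $L'_k(w_0)=L_k(w_0)=n-k$, and in particular $L'_{w_0(j)}(w_0)=n-w_0(j)=j-1$.

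Now both required inequalities from~(\ref{def:good_pair_perm}) follow from trivial upper bounds on Lehmer codes. For the first, $L_j(\tau)=|\{k>j:\tau(k)<\tau(j)\}|\le n-j=L_j(w_0)$ because there are only $n-j$ positions strictly to the right of $j$. For the second, I invoke Lemma~\ref{lem:Lehmer}, which gives $L'_{\tau(j)}(\tau)=L_j(\tau)-\tau(j)+j$. Combined with $L_j(\tau)\le \tau(j)-1$ (there are at most $\tau(j)-1$ values in $[n]$ smaller than $\tau(j)$), this yields $L'_{\tau(j)}(\tau)\le j-1=L'_{w_0(j)}(w_0)$. Hence $(w_0,\tau)\in GP(n)$.

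I do not anticipate a real obstacle here: the content of the lemma is precisely that $w_0$ simultaneously maximizes both Lehmer statistics appearing in~(\ref{def:good_pair_perm}) and has empty ascent set, so the two falling-chain requirements become tautological. The only small care required is to remember to apply Lemma~\ref{lem:Lehmer} in the form $L'_{\tau(j)}(\tau)=L_j(\tau)-\tau(j)+j$ when comparing the inverse Lehmer statistics, rather than trying to bound $L'_{\tau(j)}(\tau)$ directly from its definition.
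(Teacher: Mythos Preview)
Your proof is correct, but it takes a different route from the paper. The paper argues directly from the lattice-path picture: reading the chain from $U_{n,n}$ downward, the elements removed from $L$ are $w_0(n)=1, w_0(n-1)=2,\ldots$, so after $k$ steps $L=\{k+1,\ldots,n\}$ is always of the shape ``all-east then all-north''; the next step removes the \emph{first} north segment of $L$, which is trivially a good pair with any remaining segment of $U$.

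Your argument instead uses the Lehmer-code reformulation~(\ref{def:good_pair_perm}) and shows that $w_0$ saturates both bounds $L_j(\cdot)\le n-j$ and $L'_{\cdot}(\cdot)\le j-1$, so the inequalities $L_j(w_0)\ge L_j(\tau)$ and $L'_{w_0(j)}(w_0)\ge L'_{\tau(j)}(\tau)$ hold automatically. This is a clean and entirely valid alternative; it has the virtue of exercising the permutation machinery the paper has just set up, while the paper's version is shorter and keeps the intuition in the diagram. One minor wording point: the conclusion follows from the discussion \emph{preceding} Theorem~\ref{thm:goodpairs} (the equivalence between chains and pairs $(\sigma,\tau)\in GP(n)$, together with the falling condition $A(\sigma)\cap A(\tau)=\emptyset$), not from the theorem itself, which is only a counting statement.
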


\begin{proof}
    It's clear that $C_\tau$ is falling. To see that $C_\tau$ defines a chain in ${\mathcal P}_n$, note that when removing pairs from top to bottom, as an invariant we have that $L$ is all-east and then all-north, and the next element to be removed from it is the first north step. Hence with whatever element to be removed from $U$ it will form a good pair.
\end{proof}

Note in ${\mathcal P}_3$, the $6$ falling chains depicted in Figure \ref{fig:fallingchains} are all of the form $C_{(w_0, \tau)}$, where $\tau$ varies over all elements in $S_3$. For example the falling chain $((3,2),(2,1),(1,3))$ corresponds to $\tau = (2\,1\,3)$.

Lemma \ref{lem:sigma=w_0} also implies that $|{\mathcal F}_n| \geq n!$ for any $n$. This also follows from (the stronger) Corollary \ref{cor:lowerbound}, along with the observation that $|{\mathcal F}_1| =1$.

\begin{example} \label{ex:chains}
In $\mathcal{P}_4$ we have $25$ falling chains (so that $|{\mathcal F}_4| = 25$), $24$ of which follow from Lemma~ \ref{lem:sigma=w_0}. There is one \emph{exceptional} chain given by $C = \big((4, 1),(2, 3),(3, 2),(1,4)\big)$. Similarly we have $|{\mathcal F}_5| = 140$, of which $120$ come from Lemma  \ref{lem:sigma=w_0}. Recall that $[U_{0,5},M_{1,1}] \cong {\mathcal P}_4$, and hence $C$ can extended to the falling chain $C^\prime = ((5,2), (3,4), (4,3), (2,5), (1,1))$ in ${\mathcal P}_5$. As in the proof of Theorem \ref{thm:FC_a>FC_b} we can also extend $C^\prime$ to falling chains in  $[U_{0,5},M_{k,1}]$ for $k = 2, \dots, 5$. We list these below:
\begin{align*}  
&((5,2), (3,4), (4,3), (2,5), (1,1)),\\
&((5,1), (3,4), (4,3), (2,5), (1,2)),\\
&((5,1), (3,4), (4,2), (2,5), (1,3)),\\
&((5,1), (3,3), (4,2), (2,5), (1,4)),\\
&((5,1), (3,3), (4,2), (2,4), (1,5)).\\
\end{align*}
\end{example}
At the other extreme, we have the following.

\begin{lemma}\label{lem:tau=id}
    Let $\tau = (1 \; 2 \; \dots \; n)$ denote the identity permutation, and let $\sigma$ be any permutation. Then $C_{(\sigma, \tau)}$ is not a falling chain in ${\mathcal P}_n$ unless $\sigma = w_0$.
\end{lemma}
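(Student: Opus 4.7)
The plan is to invoke the combinatorial translation of the falling chain condition derived earlier in the section (established just above Theorem~\ref{thm:goodpairs}): whenever $(\sigma, \tau) \in S_n \times S_n$ corresponds to a maximal chain $C_{(\sigma, \tau)}$ in $\mathcal{P}_n$, the falling condition $(\ell_i, u_i) \not< (\ell_{i+1}, u_{i+1})$ for every $i$ is equivalent to the disjointness of ascent sets, $A(\sigma) \cap A(\tau) = \emptyset$, where $A(\sigma) = \{i \in [n-1] : \sigma(i) < \sigma(i+1)\}$. Once this reformulation is applied, the lemma reduces to a trivial observation about the ascent set of the identity.

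Indeed, taking $\tau = (1\,2\,\cdots\,n)$, we have $\tau(i) = i < i+1 = \tau(i+1)$ for every $i \in [n-1]$, so that $A(\tau) = [n-1]$ is as large as it can possibly be. The disjointness condition $A(\sigma) \cap [n-1] = \emptyset$ then forces $A(\sigma) = \emptyset$, i.e.\ $\sigma(1) > \sigma(2) > \cdots > \sigma(n)$, which is precisely $\sigma = w_0$. Hence no $\sigma \neq w_0$ can give rise to a falling chain when paired with $\tau = \mathrm{id}$.

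There is no genuine obstacle in the argument; the content is packaged into the ascent set reformulation, and the identity is simply the worst-case choice of $\tau$ from the standpoint of the falling condition. I would conclude with the complementary observation that the "unless" in the lemma is sharp: by Lemma~\ref{lem:sigma=w_0} the chain $C_{(w_0, \tau)}$ is a falling chain for every $\tau \in S_n$, so in particular $C_{(w_0, \mathrm{id})}$ is a falling chain in $\mathcal{P}_n$.
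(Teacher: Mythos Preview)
Your proof is correct and takes essentially the same approach as the paper's one-line proof (``If $\sigma \neq w_0$, one can check that $C_{(\sigma,\tau)}$ has an increasing step''); you simply make explicit via the ascent-set reformulation why that increasing step must exist, namely because $A(\tau)=[n-1]$ forces any ascent of $\sigma$ to produce one.
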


\begin{proof}
If $\sigma \neq w_0$, one can check that $C_{(\sigma,\tau)}$ has an increasing step.
\end{proof}

We next observe a kind of duality in the set of falling chains in ${\mathcal P}_n$.

\begin{proposition}\label{prop:duality}
    Suppose $\sigma$ and $\sigma^\prime$ are permutations of $[n]$ with reduced words expressions $\sigma = s_{k_1}s_{k_2}\dots s_{k_l}$ and $\sigma^\prime = s_{n-{k_1}}s_{n-{k_2}}\dots s_{n-{k_l}}$. Then the number of falling chains of the form $C_{(\sigma,\tau)}$ is equal to the number of falling chains of the form $C_{(\sigma^\prime, \tau)}$.  
\end{proposition}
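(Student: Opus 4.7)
The plan is to recognize $\sigma'$ as the conjugate $w_0 \sigma w_0$ of $\sigma$ by the longest element, and then to exhibit a bijection $C_\sigma \to C_{\sigma'}$ using the duality involution $m \mapsto m^*$ already constructed in Lemma~\ref{lem:dual}.

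First I would verify that conjugation by $w_0$ sends $s_i$ to $s_{n-i}$ (the standard type-$A$ Dynkin diagram automorphism). A direct check: since $w_0(j) = n+1-j$, the product $w_0 s_i w_0$ swaps the values $n-i$ and $n+1-i$, so equals $s_{n-i}$. Since $w_0^2 = e$, it follows that for any reduced word $\sigma = s_{k_1} s_{k_2} \cdots s_{k_l}$ we obtain
\[
w_0 \sigma w_0 \;=\; (w_0 s_{k_1} w_0)(w_0 s_{k_2} w_0)\cdots(w_0 s_{k_l} w_0) \;=\; s_{n-k_1} s_{n-k_2} \cdots s_{n-k_l} \;=\; \sigma'.
\]
(Length is preserved, so the right-hand side is reduced, but we do not actually need this for the argument.)

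Next I would apply Lemma~\ref{lem:dual}: for any falling chain $m = ((\ell_1,u_1),\dots,(\ell_n,u_n))$, the chain $m^* = ((n+1-\ell_n,n+1-u_n),\dots,(n+1-\ell_1,n+1-u_1))$ is again a falling chain in $\mathcal{P}_n$. Writing $m = C_{(\sigma,\tau)}$ (so $\sigma(i)=\ell_i$ and $\tau(i)=u_i$), the $i$-th entry of $m^*$ is
\[
\bigl(n+1-\sigma(n+1-i),\; n+1-\tau(n+1-i)\bigr) \;=\; \bigl((w_0 \sigma w_0)(i),\;(w_0 \tau w_0)(i)\bigr),
\]
so $m^* = C_{(w_0 \sigma w_0,\; w_0 \tau w_0)} = C_{(\sigma',\, w_0 \tau w_0)}$.

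Finally, since $m \mapsto m^*$ is an involution on the set of falling chains of $\mathcal{P}_n$ and $\tau \mapsto w_0 \tau w_0$ is a bijection of $S_n$, the restriction of $m \mapsto m^*$ to $C_\sigma$ yields a bijection onto $C_{\sigma'}$, giving $|C_\sigma| = |C_{\sigma'}|$. There is no substantial obstacle here: the whole argument reduces to identifying $\sigma'$ with the $w_0$-conjugate of $\sigma$ and tracking how the duality of Lemma~\ref{lem:dual} acts on the associated permutation pair.
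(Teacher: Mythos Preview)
Your proof is correct and follows essentially the same approach as the paper: identify $\sigma' = w_0\sigma w_0$ via the Dynkin automorphism $s_i \mapsto s_{n-i}$, then invoke Lemma~\ref{lem:dual} to obtain the bijection $C_{(\sigma,\tau)} \mapsto C_{(\sigma',\,w_0\tau w_0)}$ between $C_\sigma$ and $C_{\sigma'}$. If anything, your write-up is slightly more explicit than the paper's in spelling out the computation $m^* = C_{(w_0\sigma w_0,\,w_0\tau w_0)}$ and in noting that the map is an involution.
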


\begin{proof}
    We define a group automorphism $\phi: S_n \rightarrow S_n$ by conjugating the longest permutation $\phi(\sigma) = w_0\sigma w_0$. Since $\phi(s_i) = s_{n-i}$, we have $\phi(\sigma) = \sigma^\prime$ from the statement. Actually, the converse is also true. If $\sigma^\prime = \phi(\sigma)$ and $\sigma = s_{k_1}s_{k_2}\dots s_{k_l}$ is a reduced word for $\sigma$, then $s_{n-{k_1}}s_{n-{k_2}}\dots s_{n-{k_l}}$ is a reduced word for $\sigma^\prime$. By the above relation between $\sigma$ and $\sigma^\prime$, we have $\sigma(i) = n + 1 - \sigma^\prime(n+1-i)$. Thus, by Lemma \ref{lem:dual}, we have a bijection between $C_\sigma$ and $C_\sigma^\prime$.
\end{proof}
Lemmas \ref{lem:l_n=1} and \ref{lem:sigma=w_0} and Proposition \ref{prop:duality} suggest that the more inversions a permutation $\sigma$ has, the more falling chains we have in $C_\sigma$. Recall that the \emph{left Bruhat order} $\preceq_L$ on $S_n$ by for $u \preceq_L w$ if $w = s_{i_1}\cdots s_{i_k}u$, such that $ l(s_{i_1}\cdots s_{i_k}u) = l(u)+k$, for $w,u\in S_n$. 
Note that in this order we have the minimal element given by the identity permutation and the maximal element given by $w_0$. Also note that the $\sigma$ and $\sigma^\prime$ from the Proposition \ref{prop:duality} have the same rank in the Bruhat lattice. The above results suggest that moving $\sigma$ down the Bruhat order limits the number of falling chains one has in $C_\sigma$. Indeed this is the case.

\begin{theorem}\label{thm:Bruhat}
   Suppose $\sigma$ and $\sigma'$ are elements in $S_n$ with $\sigma \preceq_L \sigma'$. Then we have $|C_\sigma| \leq |C_{\sigma'}|$. 

\end{theorem}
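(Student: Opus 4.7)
The plan is to prove the inequality by induction on the difference $\ell(\sigma') - \ell(\sigma)$, reducing to the case where $\sigma'$ covers $\sigma$ in left Bruhat order. So assume $\sigma' = s_i \sigma$ with $\ell(\sigma') = \ell(\sigma) + 1$, and set $a = \sigma^{-1}(i)$ and $b = \sigma^{-1}(i+1)$; the length condition forces $a < b$. Then $\sigma'$ differs from $\sigma$ only at positions $a$ and $b$, where the values $i$ and $i+1$ are swapped. I will show that the map sending a falling chain $C_{(\sigma, \tau)}$ to $C_{(\sigma', \tau)}$, keeping $\tau$ fixed, is a well-defined injection $C_\sigma \hookrightarrow C_{\sigma'}$.

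Injectivity is immediate, since different $\tau$'s yield different sequences of pairs. For well-definedness I need to check, using Theorem~\ref{thm:goodpairs}, that $(\sigma', \tau) \in GP(n)$ and $A(\sigma') \cap A(\tau) = \emptyset$ whenever both hold for $\sigma$. A direct count shows that the Lehmer code shifts in a controlled way: $L_a(\sigma') = L_a(\sigma) + 1$ (the value $i$ now sits at position $b > a$, creating one new inversion with $\sigma'(a) = i+1$) and $L_j(\sigma') = L_j(\sigma)$ for every $j \neq a$. For the ascent set, a short case analysis on the adjacent positions $a \pm 1$ and $b \pm 1$ shows that $A(\sigma') = A(\sigma)$ when $b > a+1$, and $A(\sigma') = A(\sigma) \setminus \{a\}$ when $b = a+1$; in either case $A(\sigma') \subseteq A(\sigma)$, so ascent-disjointness transfers from $\tau$ relative to $\sigma$ to $\tau$ relative to $\sigma'$.

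It then remains to verify $(\sigma', \tau) \in GP(n)$. The cleanest route is to rewrite condition (\ref{def:good_pair_perm}) using Lemma~\ref{lem:Lehmer} as the pair of inequalities $L_j(\sigma) \geq L_j(\tau)$ and $\sigma(j) - L_j(\sigma) \leq \tau(j) - L_j(\tau)$ for every $j$. With the Lehmer shift in hand, the first inequality for $\sigma'$ is preserved since only $L_a$ increases; the second is preserved at every $j$, because at $j = a$ the quantity $\sigma'(a) - L_a(\sigma') = (i+1) - (L_a(\sigma)+1)$ equals $\sigma(a) - L_a(\sigma)$ and is unchanged, at $j \neq a, b$ nothing moves, and at $j = b$ we have $\sigma'(b) - L_b(\sigma') = \sigma(b) - L_b(\sigma) - 1$, which only weakens the inequality. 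Hence both $GP(n)$ conditions hold for $(\sigma', \tau)$, and the covering-step inequality $|C_\sigma| \leq |C_{\sigma'}|$ follows.

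I expect the main obstacle to be psychological rather than technical: the right idea is to keep $\tau$ unchanged and exploit the asymmetry between $\sigma$ and $\sigma'$. A tempting alternative, swapping the values $i$ and $i+1$ in $\tau$ to \emph{match} the swap in $\sigma$, quickly leads to a tangle with $L'$ on $\tau$ and does not produce a clean bijection. The ascent case analysis when $b = a+1$ is the only place where some care is needed, but it reduces to the observation that for $j \neq a, b$ we have $\sigma(j) \notin \{i, i+1\}$, so each neighbor comparison is unaffected by the swap. With these pieces in place, the theorem follows by induction on the left Bruhat length difference.
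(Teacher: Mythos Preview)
Your proposal is correct and follows essentially the same route as the paper: reduce to a cover $\sigma' = s_i\sigma$, keep $\tau$ fixed, and verify via the Lehmer-code shift $L_a(\sigma') = L_a(\sigma)+1$, $L_j(\sigma') = L_j(\sigma)$ for $j\neq a$ that both the good-pair inequalities and the falling condition persist. The only cosmetic difference is that the paper phrases the good-pair step as showing $(\sigma',\sigma)\in GP(n)$ and then (implicitly) using transitivity of the inequalities in (\ref{def:good_pair_perm}), whereas you verify $(\sigma',\tau)\in GP(n)$ directly; your ascent-set case analysis is also more explicit than the paper's one-line remark.
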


\begin{proof}
    We only need to prove the statement is true when $\sigma \lessdot_L \sigma'$ which means $\sigma' = s_i\sigma$. Since $l(\sigma') = l(\sigma) + 1$, we must have $\sigma^{-1}(i) < \sigma^{-1}(i+1)$. Given a falling chain $C_{(\sigma,\tau)}$, we want to show $C_{(\sigma',\tau)}$ is also a falling chain in $\mathcal{P}_n$. We first check the falling condition. If $(\sigma,\tau)$ satisfies the falling condition, we want to show $(\sigma',\tau)$ also satisfies the falling condition. Left multiplication to a permutation is just switching the position of number $i$ and $i+1$ in one line notion. If $\sigma = (\dots,i,*,\dots,*,i+1,\dots)$, then $(\sigma',\tau)$ will also satisfy the falling condition since switching position of $i$ and $i+1$ will not affect the comparability and relation in our edge-labeling of the chains. Now we start checking the good pairing condition. We just need to show that the chain $C_{(\sigma',\sigma)}$ satisfies the good pairing condition. Let $L(\sigma) = (a_1,a_2,...a_{n-1},a_n)$ represents the Lehmer code for $\sigma$. By definition $a_j = |\{k > j: \sigma(k) < \sigma(j)\}$, we can conclude that $L_j(\sigma') = L_j(\sigma) = a_j$ when $j\neq \sigma^{-1}(i)$, and $L_{\sigma'^{-1}(i+1)}(\sigma) = L_{\sigma^{-1}(i)}+1 = a_{\sigma^{-1}(i)}+1$. Thus, we have $L_j(\sigma')\geq L_j(\sigma)$ for all $i$. We also have $\sigma(j) - L_j(\sigma) = \sigma'(j) - L_j(\sigma')$ for all $j\neq \sigma^{-1}(i+1)$ and $\sigma(j) - L_j(\sigma) +1 = \sigma'(j) - L_j(\sigma')$ for $j = \sigma^{-1}(i+1) = \sigma'^{-1}(i)$. Thus, we get $L'_{\sigma(j)}(\sigma) \geq L'_{\sigma'(j)}(\sigma')$. Thus, we prove that $(\sigma',\sigma)$ satisfies the inequalities (\ref{def:good_pair_perm}).
\end{proof}

In general, it seems to difficult to determine $|C_\sigma|$ for arbitrary $\sigma \neq w_0$. However we were able to compute the following.

\begin{theorem}\label{thm:siw0}
    The number of falling chain in poset of LPMs with given lower path permutation $s_iw_0$ where $2\leq i \leq n-1$, $|C_{s_iw_0}|$, equals to 
    $$\frac{n!}{(n-i)(n-i+1)}+\frac{n!}{i(i+1)}+\frac{n!}{i(n-i)}+\frac{n!}{2}-\frac{n!}{i}-\frac{n!}{n-i}-(i-1)!(n-1-i)!$$
    
\end{theorem}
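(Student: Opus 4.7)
The plan is to translate the conditions for $C_{(\sigma,\tau)}$ with $\sigma = s_iw_0$ to be a falling chain into a small set of explicit restrictions on $\tau$, and then count via inclusion-exclusion.

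First I will write $\sigma$ in one-line notation as
\[\sigma = (n,n-1,\ldots,i+2,\,i,\,i+1,\,i-1,\ldots,2,1),\]
so that $A(\sigma)=\{n-i\}$ and $L_j(\sigma)=n-j$ for $j\neq n-i$, with $L_{n-i}(\sigma)=i-1$. A parallel computation shows $\sigma^{-1}$ has the same shape with $i$ and $n-i$ interchanged, giving $L_m(\sigma^{-1})=n-m$ for $m\neq i$ and $L_i(\sigma^{-1})=n-i-1$.

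Since $L_j(\tau)\le n-j$ and $L'_{\tau(j)}(\tau)\le j-1$ hold automatically for every $\tau \in S_n$, the inequalities (\ref{def:good_pair_perm}) together with the falling condition reduce to three restrictions on $\tau$: (a) $\tau(n-i)>\tau(n-i+1)$; (b) $L_{n-i}(\tau)\le i-1$; and (c) $L'_{\tau(n-i+1)}(\tau)\le n-i-1$, the latter coming from $j=n-i+1$ where $\sigma(j)=i+1$. Using (a), condition (b) says that $\tau(n-i)$ is \emph{not} the maximum of $\tau(n-i),\ldots,\tau(n)$, and (c) says $\tau(n-i+1)$ is \emph{not} the minimum of $\tau(1),\ldots,\tau(n-i+1)$; in particular, violating either (b) or (c) already forces (a).

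By inclusion-exclusion,
\[|C_{s_iw_0}| = \frac{n!}{2} - \frac{n!}{i+1} - \frac{n!}{n-i+1} + N_{bc},\]
where $n!/(i+1)$ and $n!/(n-i+1)$ count the $\tau$ in which $\tau(n-i)$ (resp.\ $\tau(n-i+1)$) is extremal in a block of $i+1$ (resp.\ $n-i+1$) consecutive positions, and $N_{bc}$ counts the $\tau$ violating (b) and (c) simultaneously. To compute $N_{bc}$ I will set $x=\tau(n-i)$ and $y=\tau(n-i+1)$ and observe that the joint extremality forces the value sets $S_1$ (of the first $n-i-1$ positions) and $S_2$ (of the last $i-1$ positions) to satisfy $S_1\subseteq\{y+1,\ldots,n\}\setminus\{x\}$ and $S_2\subseteq\{1,\ldots,x-1\}\setminus\{y\}$. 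Then values below $y$ must lie in $S_2$, values above $x$ must lie in $S_1$, and values in $(y,x)$ distribute freely. After multiplying by the internal arrangements $(n-i-1)!(i-1)!$ and summing over admissible $(x,y)$ with $y\le i$ and $x\ge i+1$, this becomes
\[N_{bc} = (n-i-1)!(i-1)! \sum_{y=1}^{i}\sum_{x=i+1}^{n}\binom{x-y-1}{x-i-1}.\]
Two successive applications of the hockey-stick identity (via the substitution $u=x-i-1$, $v=i-y$) collapse the double sum to $\binom{n}{i}-1$, giving $N_{bc}=n!/(i(n-i)) - (i-1)!(n-i-1)!$. Finally, applying the identity $\tfrac{1}{k+1}=\tfrac{1}{k}-\tfrac{1}{k(k+1)}$ to the two middle inclusion-exclusion terms rewrites the expression in the form claimed.

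The main obstacle is the simultaneous-violation count $N_{bc}$: the interlocking extremality conditions on the two middle positions require a careful partition of $[n]$ into forced and free values, and the ensuing double binomial sum needs two rounds of hockey-stick to collapse. Everything else is a mechanical translation through Lemma~\ref{lem:Lehmer} and the Lehmer-code reformulation of the good pair condition.
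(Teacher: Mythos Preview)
Your argument is correct and arrives at the stated formula; the approach is close in spirit to the paper's but differs in how the inclusion--exclusion is organized.

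The paper also reduces the problem to counting permutations $\tau$ satisfying three conditions, but it packages them as sets
\[
A=\{\tau:\tau(n-i)<\tau(n-i+k)\text{ for some }k\ge 2\},\quad
B=\{\tau:\tau(n-i+1)>\tau(n-i+1-k)\text{ for some }k\ge 2\},\quad
C=\{\tau:\tau(n-i)>\tau(n-i+1)\},
\]
and computes $|A\cap B\cap C|$ by a full three-set inclusion--exclusion, expressing each of $|A|$, $|B|$, $|A\cap B|$, $|A\cap C|$, $|B\cap C|$, $|A\cup B\cup C|$ via the complements. The key design choice is that $A$ and $B$ refer to \emph{disjoint} blocks of positions (one omits position $n-i+1$, the other omits $n-i$), so $|A^c\cap B^c|$ factors directly as $\binom{n}{i}(i-1)!(n-i-1)!=n!/(i(n-i))$, and likewise each remaining piece is a one-line symmetry count.

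Your version exploits the observation that violating either Lehmer-code constraint already forces the descent at $n-i$, which collapses the problem to a two-set inclusion--exclusion inside the set where (a) holds. The price is that your ``bad'' events live on \emph{overlapping} position blocks (both contain $n-i$ and $n-i+1$), so the joint term $N_{bc}$ is no longer a product; you handle this with the $(x,y)$ partition and a double hockey-stick evaluation to recover $\binom{n}{i}-1$. Both routes are clean: the paper trades an extra layer of inclusion--exclusion for trivially computable pieces, while you trade a harder joint term for one fewer layer. Your final algebraic rewrite via $\tfrac{1}{k+1}=\tfrac{1}{k}-\tfrac{1}{k(k+1)}$ is exactly what is needed to match the form stated in the theorem.
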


\begin{proof}
    Let $u = (u_1,u_2,\dots,u_{n-i-1},u_{n-i},u_{n-i+1},u_{n-i+2},\dots, u_n)$ represent the one line notation of an element in $S_n$. We fix $i$ and consider the following $3$ sets:
    \begin{enumerate}
        \item $A = \{u \mid u_{n-i} < u_{n-i+k} \text{ for some } k\geq 2\}$
        \item $B = \{u \mid u_{n-i+1} > u_{n-i+1-k} \text{ for some } k\geq 2\}$
        \item $C = \{u \mid u_{n-i} > u_{n-i+1}\}$
    \end{enumerate}
    By the definition of falling chains in LPMs, the set $A \cap B \cap C$ represents all falling chains with fixed lower path permutation $s_iw_0$. We would compute the cardinality of $A \cap B \cap C$ and we use inclusion and exclusion principle to compute it. We will compute the cardinality of the following sets:
    \begin{enumerate}
        \item $|A| = n! - {{n} \choose {i}}(i-1)!(n-i)! = n! - \frac{n!}{i}$ where ${{n} \choose {i}}(i-1)!(n-i)! = |A^c|$. Note that the size of $A^c$ is computed by picking $i$ numbers from $[n]$ for $u_{n-i},u_{n-i+2},u_{n-i+3},\dots ,u_n$. Then we assign the largest picked number to $u_{n-i}$ and permute the rest of numbers. 
        \item $|B| = n!-\frac{n!}{(n-i)}$.
        \item $|C| = \frac{n!}{2}$ 
        \item $|A \cap B| = |A|+|B|-|A\cup B| = 2n! -\frac{n!}{i}-\frac{n!}{n-i} - (n! - \frac{n!}{i(n-i)})$ where $\frac{n!}{i(n-i)} = |A\cup B|^c$. The size of $(A\cup B)^c$ is computed by picking $i$ numbers from $[n]$ for $u_{n-i},u_{n-i+2},u_{n-i+3},\dots ,u_n$. Then we assign the largest picked number to $u_{n-i}$ and assign the smallest unpicked number to $u_{n-i+1}$ and permute the rest of numbers. Therefore, $|A\cup B|^c = {n\choose i}(i-1)!(n-i-1)!$.  
        \item $|A \cap C| = |A|+|C|-|A\cup C| = n!-\frac{n!}{i} + \frac{n!}{2} - (n! -\frac{n!}{i(i+1)}) = \frac{n!}{2} - \frac{n!}{i}+\frac{n!}{i(i+1)}$ where $\frac{n!}{i(i+1)} = |A\cup C|^c$. The size of $(A\cup C)^c$ is computed by picking $i+1$ numbers from $[n]$ for $u_{n-i},u_{n-i+1},u_{n-i+2},\dots ,u_n$. Then we assign the largest picked number to $u_{n-i+1}$ and second largest picked number to $u_{n-i}$. Therefore, $|A\cup C|^c = {n \choose i+1}(i-1)!(n-i-1)!$.
        \item $|B\cap C| = \frac{n!}{2} - \frac{n!}{n-i}+\frac{n!}{(n-i)(n-i-1)}$.
        \item $|A \cup B \cup C| = n!- (i-1)!(n-i-1)!$ where $(n-i-1) = |A \cup B \cup C|^c$. The size of $(A \cup B \cup C)^c$ is given by assigning $u_{n-i} = i, u_{n-i+1} = i+1$, $(u_1,u_2,\dots, u_{n-i-1})$ to a permutation of $\{i+2,i+3,\dots, n-1,n\}$ and $(u_{n-i+2},u_{n-i+3},\dots, u_n)$ to a permutation of $[i-1]$.
    \end{enumerate} 
    
    By inclusion-exclusion, we have $|A|+|B|+|C|-|A\cap B|-|A\cap C|-|B\cap C|+|A\cap B\cap C| = |A\cup B\cup C|$. We plug-in the values for sizes of those sets and obtain  the above formula.
\end{proof}

For example for $n=4$, if $i=2$ then $\sigma = s_2 w_0 = (4 \; 2 \;  3 \; 1)$ and we get from Theorem \ref{thm:siw0} that
\[|C_\sigma| = 4 + 4 + 6 + 12 - 12 - 12 - 1 = 1.\]
Indeed, we have seen in Example \ref{ex:chains} that $C_\sigma = \{C = \big((4, 1),(2, 3),(3, 2),(1,4)\big)\}$.

\section{Whitney duality}\label{sec:whitney_dual}

In this section we show that our edge labeling of $\mathcal{P}_n$ satisfies further properties related to a notion of poset duality. To motivate these constructions, we recall the notion of Whitney numbers of ranked posets. Suppose $P$ is a poset with rank function $\rho: P \rightarrow {\mathbb N}$ and minimum element $\hat{0}$, with rank and characteristic polynomials given by
\begin{align*}
    f_P(t)=\sum_i a_it_i &\qquad \chi_P(t)=\sum_ib_it^i
\end{align*}
respectively. Here $a_i=|\rho^{-1}(i)|$ and $b_i=\sum_{x\in \rho^{-1}(i)} \mu(\hat{0},x)$ are the \emph{Whitney numbers} of second and first kind, respectively. For instance, the rank and characteristic polynomials of the poset ${\mathcal P}_3$ (see Figure \ref{fig:fallingchains}) are given by
\begin{align*}
    f(t)=1+6t+6t^2+t^3 &\qquad \chi(t)=1-6t+11t^2-6t^3.
\end{align*}

Following~\cite{GH}, we say that a poset $Q$ with minimum element $\hat{0}$ is a \emph{Whitney dual} of $P$ if $Q$ satisfies:
\begin{align*}
    f_Q(t)=\sum_i|b_i|_it_i &\qquad \chi_Q(t)=\sum_ic_it^i
\end{align*}
where $|c_i|=a_i$. 

In general it is not clear if a given $P$ has a Whitney dual $Q$, or how to construct $Q$ if one does exists. To this end D'Le\'{o}n and Hallam introduced the following notion in~\cite{GH}.
\begin{definition}\cite[Definition 3.26]{GH} \label{defn:EW}
A labeling $\lambda$ of a graded poset $P$ is called EW if it satisfies:
\begin{enumerate}
\item $\lambda$ is an \emph{$ER$-labeling}: in each closed interval $[x, y]$ of $P$ there is a unique increasing maximal chain.
\item $\lambda$ has the \emph{rank two switching property}: if for every maximal chain $(0<c_1<...<c_k<1)$ with an ascending step $\lambda(c_i<c_{i+1})<\lambda(c_{i+1}<c_{i+2})$ there is a unique element $c'_{i+1}$ such that  $\lambda(c_i<c'_{i+1})=\lambda(c_{i+1}<c_{i+2})$ and $\lambda(c_i<c_{i+1})=\lambda(c'_{i+1}<c_{i+2})$ and $(0<c_1<...<c_i<c'_{i+1}<c_{i+2}<...<c_k<1)$ is a chain.
    \item in each interval, each maximal falling chain has a unique word of labels\footnote{Note that in \cite{GH} a falling chain is called \emph{ascent-free}.}.
\end{enumerate}
\end{definition}

We are able to show that $\mathcal{P}_n$ admits an EW-labeling and hence has a Whitney dual.

\begin{proposition}\label{prop:EW}
    The good pair labeling is an EW-labeling of $\mathcal{P}_n$.
\end{proposition}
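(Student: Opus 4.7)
The plan is to verify the three conditions of Definition \ref{defn:EW} in turn.

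Condition (1) comes essentially for free from Theorem \ref{thm:EL}. Our $\lambda$ is already an EL-labeling, so every interval has a unique weakly increasing maximal chain; I would then observe that two consecutive labels in any saturated chain remove disjoint pairs and hence cannot be equal, so weakly increasing coincides with strictly increasing, giving an ER-labeling.

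For condition (2), consider a saturated sub-chain $c_i \lessdot c_{i+1} \lessdot c_{i+2}$ with an ascending step, labels $(\ell,u) <_\Lambda (\ell',u')$. Since the two covers remove distinct elements, $\ell \neq \ell'$ and $u \neq u'$, so the ascending condition forces $\ell < \ell'$ and $u < u'$ strictly. Writing $c_i = M[U,L]$, I would verify that both pairs are already good pairs of $M$: the first by definition of the covering label, and the second by an index-shift argument --- since $\ell < \ell'$ and $u < u'$, removing $\ell$ from $L$ and $u$ from $U$ lowers the positions of $\ell'$ and $u'$ each by exactly one, so the two inequalities in Definition \ref{def:good_pair} for $(\ell',u')$ in $c_{i+1}$ translate verbatim to the same inequalities in $c_i$. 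Lemma \ref{lem:removing} then provides the reverse-order saturated chain $c_i \lessdot c'_{i+1} \lessdot c_{i+2}$ with $c'_{i+1} = M[U\setminus\{u'\}, L\setminus\{\ell'\}]$, whose labels are $(\ell',u')$ and $(\ell,u)$, as required. Uniqueness of $c'_{i+1}$ is immediate because the first label pins down precisely which elements are deleted from $U$ and $L$, hence pins down the intermediate LPM.

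For condition (3), I would appeal to a structural property of our labeling itself: the label of a cover $M[U,L] \lessdot M[U',L']$ is exactly the pair $(L \setminus L', U \setminus U')$, so each label completely recovers the covered LPM from the covering LPM. Consequently, within any interval the full word of labels along a saturated chain (together with the top element) reconstructs the chain itself, so no two distinct maximal chains can share a label word. In particular each maximal falling chain has a unique word of labels.

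The only point where genuine bookkeeping is needed is the verification in condition (2) that $(\ell',u')$ is a good pair of $c_i$ rather than merely of $c_{i+1}$; the strictness of $\ell<\ell'$ and $u<u'$ coming from the ascending assumption is precisely what makes this index shift uniform, so Lemma \ref{lem:removing} applies cleanly. The remainder of the argument is a direct consequence of Theorem \ref{thm:EL} and of the explicit form of the good pair labeling.
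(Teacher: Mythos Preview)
Your proposal is correct and follows essentially the same three-part route as the paper: condition (1) from Theorem \ref{thm:EL}, condition (2) via Lemma \ref{lem:removing}, and condition (3) from the injectivity of label words. The one place you add genuine content is in condition (2), where you explicitly verify that the second pair $(\ell',u')$ is a good pair of $c_i$ (not just of $c_{i+1}$) before invoking Lemma \ref{lem:removing}; the paper simply says the property is ``an immediate consequence'' of that lemma, so your index-shift argument is a welcome clarification rather than a departure.
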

\begin{proof}
Property (1) follows since our labeling is an EL-labeling from Theorem~\ref{thm:EL}. Property (2) is an immediate consequence of Lemma~\ref{lem:removing}. Finally, Property (3) is satisfied trivially because our $\lambda$ is injective on the chains, i.e., no two saturated chains have the same sequence of labels.
\end{proof}

In \cite{GH}, the authors describe how to construct a Whitney dual $Q_{\lambda}(P)$ of a poset $P$ with a given EW-labeling $\lambda$. The basic idea is to consider the poset consisting of all saturated chains containing $\hat{0}$, and to construct a quotient defined by an equivalence relation determined by $\lambda$. We refer to \cite{GH} for details, and in what follows we describe the construction for the case of ${\mathcal P}_3$.

For this, first recall that any Whitney dual of ${\mathcal P}_3$ is a poset whose rank function is given by $1+6t+11t^2+6t^3$.
Let $C_3$ be the poset whose elements are saturated chains in ${\mathcal P}_3$ containing $\hat{0}$, ordered by containment. Since ${\mathcal P}_3$ has 17 maximal chains (see Figure~\ref{fig:Q_3}), $C_3$ is a poset with $17$ maximal elements illustrated in the top of Figure \ref{fig:C_3}. The edges of the Hasse diagram of $C_3$ are labeled using good pairs coming from our labeling. In Figure \ref{fig:C_3} the reader can see the poset $C_3$ with its 17 maximal chains.

\begin{figure}[htp]
    \centering
    \includegraphics[width=12cm]{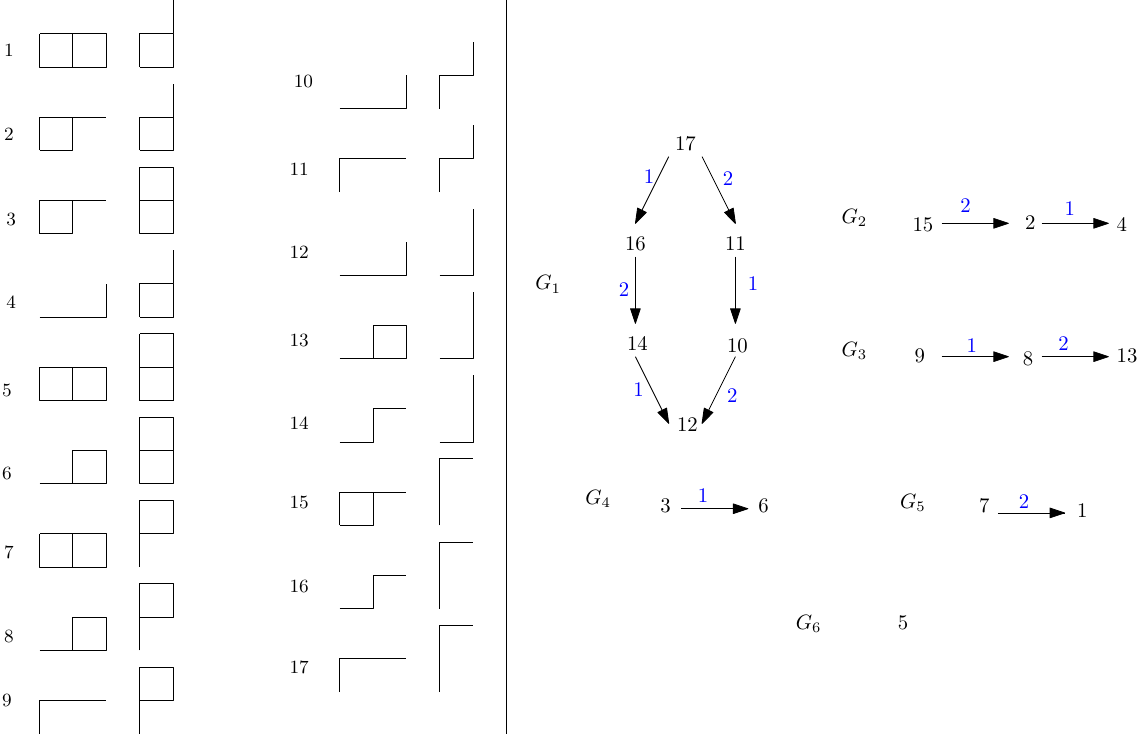}
    \caption{Left: the 17 maximal chains of the poset $P_3$. Right: The Whitney equivalence classes partitioned into 6 graphs. Blue labels on edges correspond to equivalences in $C_3$.}\label{fig:Q_3}
\end{figure}

In the poset $C_3$ we say that two elements $c:\hat{0} = a_0\lessdot a_1\lessdot\cdots\lessdot a_k$ and $c':\hat{0} = b_0\lessdot b_1\lessdot\cdots\lessdot b_k$ are equivalent, denoted $c\sim c'$, if there exists $i$ such that $\lambda(c)_i=\lambda(c')_{i+1}$, $ \lambda(c)_{i+1}=\lambda(c')_i$ and $\lambda(c)_j=\lambda(c')_j$ for all other $j$, where $a_i\xrightarrow{\lambda(c)_i}a_{i+1}$, and similarly for $c'$. For instance, in $C_3$ we have $c_3\sim c_6$ since the sequence of labels of $c_3$ are $(21,32,13)$ and in $c_6$ we have $(32,21,13)$. We then write $c_3\xrightarrow{1}c_6$ as their labels $1$ and $2$ are swapped. In this way each equivalence class carries the structure of a directed graph with unique sink as shown on the right of Figure \ref{fig:Q_3}. We obtain a poset $\mathcal{Q}_3$ by identifying the equivalence classes of $\sim$; see the bottom of Figure \ref{fig:C_3}. The rank function and characteristic polynomial of $\mathcal{Q}_3$ are $1+6t+11t^2+6t^3$ and $1-6t+6t^2-t^3$, respectively. Hence, $\mathcal{Q}_3$ is a Whitney dual of $\mathcal{P}_3$.

\begin{figure}[htp]
    \centering
    \includegraphics[width=12cm]{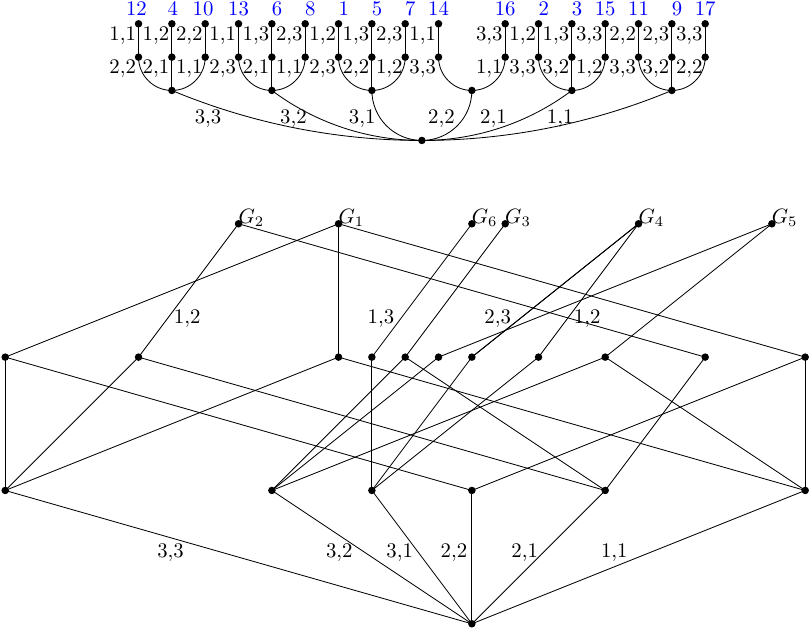}
    \caption{Top: $C_3$. Bottom: Whitney dual ${\mathcal Q}_3$ of ${\mathcal P}_3$. To reduce notation, edges with the same label are drawn with the same slope.}\label{fig:C_3}
\end{figure}

\begin{remark}
We note that since our labeling is both EW and EL, the number of falling chains equals the number of equivalence classes of saturated maximal chains, as defined in the construction of ${\mathcal Q}_3$ above. Both are counted by $\mu(\hat{0},\hat{1})$. Indeed, as displayed in Figure \ref{fig:C_3}, the sinks of $G_1,\dots, G_6$ are $12,4,13,6,1,5$, respectively. Each such sink corresponds to each of the falling chains from Figure \ref{fig:fallingchains}.

 \end{remark}

\section{Further questions}\label{sec:further}
In this section we collect some open questions and directions for further study.

\begin{question}\label{ques:falling}
For our EL-labeling of ${\mathcal P}_n$, can one describe better bounds for the number of falling chains in any interval?
\end{question}

Recall that an answer to Question \ref{ques:falling} would provide bounds on the values of the M\"obius function for ${\mathcal P}_n$. In particular $\mu(\hat 0, \hat 1)$ is given by the number of falling chains in the poset ${\mathcal P}_n$ itself.

Recall from the introduction that it is of interest to study certain positroid flags ${\mathcal M}: M_0 \leq_q M_1 \leq_q \cdots \leq_q M_n$, where each $M_i$ is a quotient of $M_j$ for $i < j$. Let ${\mathcal O}_n$ denote the poset of positroid flags on ground set $[n]$ (see \cite{BChT}). Note that ${\mathcal P}_n$ is an induced (full?) subposet  of ${\mathcal O}_n$. Recall that a maximal chain in ${\mathcal O}_n$ corresponds to a point in the nonnegative flag variety if and only if the underlying flag matroid polytope $P_{\mathcal M}$ is an interval in the Bruhat order of $S_n$.   We have also seen that a lattice path flag matroid (by definition a maximal chain in ${\mathcal P}_n$) is an example of such a point.

We have seen that ${\mathcal P}_n$ has the homotopy type of a wedge of spheres, and we also have the inclusion $\iota: {\mathcal P}_n \rightarrow {\mathcal O}_n$.
It then natural to ask if this inclusion relates the topology of the two posets.

\begin{question}
Can one understand the topology of the map $\iota: {\mathcal P}_n \rightarrow {\mathcal O}_n$? In particular, can one construct a retract $r: {\mathcal O}_n \rightarrow {\mathcal P}_n$ whose fibers are highly connected?
\end{question}

Note that for any positroid $P$ one can naturally define an LPM $M_P[U,L]$ by taking $U$ (respectively $L$) to be minimal basis (resp. maximal) in Gale order. Does this define a poset map? 

\section{Acknowledgments}
We thank Bruce Sagan for asking the question that inspired this project, and also Rafael S. Gonaz\'alez D'Le\'on for helpful conversations. CB thanks CIMPA-ICTP Research in Pairs Fellowship. AD was partially supported by Simons Foundation Grant $\#964659$.  KK was partially supported by the Severo Ochoa and Mar\'ia de Maeztu Program for Centers and Units of Excellence in R\&D (CEX2020-001084-M) and the Spanish \emph{Ministerio de Econom\'ia, Industria y Competitividad}
through grant PID2022-137283NB-C22.

\bibliographystyle{amsplain}
\bibliography{main.bib}

\end{document}